\theoremstyle{plain}
\newtheorem{theorem}{Theorem}[section]
\newtheorem{corollary}[theorem]{Corollary}
\newtheorem{proposition}[theorem]{Proposition}
\newtheorem{lemma}[theorem]{Lemma}
\theoremstyle{definition}
\newtheorem{definition}[theorem]{Definition}
\newtheorem{remark}[theorem]{Remark}
\DeclareMathOperator{\Frac}{Frac}
\DeclareMathOperator{\Inn}{IAut}
\DeclareMathOperator{\Mult}{MAut}
\DeclareMathOperator{\Aut}{Aut}
\DeclareMathOperator{\Inv}{Inv}
\newcommand{\I}{I(X,K)}
\newcommand{\FI}{FI(X,K)}
\newcommand{\FIY}{FI(Y,K)}
\newcommand{\IL}{I(X_L,K)}
\newcommand{\FIL}{FI(X_L,K)}
\newcommand{\FILC}{FI(X_{L^c},K)}
\newcommand{\FIJFix}{FI(X_{J_3},K)}
\newcommand{\FIJFixC}{FI(X_{(J_3)^c},K)}
\newcommand{\FIj}{FI(X_j,K)}
\newcommand{\FIi}{FI(X_i,K)}
\newcommand{\Ij}{I(X_j,K)}
\newcommand{\Ii}{I(X_i,K)}
\begin{document}

\title{\textbf{Classification of involutions on finitary incidence algebras of non-connected posets}}
\author{\'{E}rica Z. Fornaroli\thanks{Corresponding author: ezancanella@uem.br} \ and Roger E. M. Pezzott\thanks{roger\_emanuel@hotmail.com} \\
\\
\textit{{\normalsize Departamento de Matem\'atica, Universidade
Estadual de Maring\'a,}} \\ \textit{{\normalsize Avenida Colombo,
5790, Maring\'a, PR, 87020-900, Brazil.}}}
\date{}

\maketitle

\begin{abstract}
Let $\FI$ be the finitary incidence algebra of a non-connected partially ordered set $X$ over a field $K$ of characteristic different from $2$. For the case where every multiplicative automorphism of $\FI$ is inner, we present necessary and sufficient conditions for two involutions on $\FI$ to be equivalent.

\noindent{\bf Keywords:} Finitary incidence algebra; involution; automorphism.

\noindent{\bf 2020 MSC:} 16W10, 16W20, 05B20
\end{abstract}

\maketitle

\section{Introduction}

Algebras with involution play an important role in the general theory of algebras (cf.~\cite{Jacobson,Knus}). In \cite{BFS12} was proved that the finitary incidence algebra of a partially ordered set $X$ over a field $K$, $\FI$, has an involution if and only if $X$ has an involution. Involutions on $\FI$ were first studied by Scharlau~\cite{Scharlau} and thirty years later by Spiegel~\cite{Spiegel05, Spiegel08}. Their results about the classification of involutions on $\FI$ (\cite[Theorem~2.1(b)]{Scharlau} and \cite[Theorem~1]{Spiegel08}) are incorrect as indicated in~\cite{BL}.

Let $K$ be a field of characteristic different from $2$. In the particular case where $X$ is a chain of cartinality $n$, $\FI\cong UT_n(K)$, the algebra of upper triangular matrices over $K$, and the classification of involutions on $UT_n(K)$ was given in \cite{DKL}. When $X$ has an element that is comparable to all of its elements, Brusamarello et al.~obtained the classification of involutions on $\FI$ for the case where $X$ is finite \cite{BFS11}, and later, locally finite~\cite{BFS12}. In~\cite{BFS14}, they generalized this classification for the case where $X$ is connected (not necessarily locally finite) and every multiplicative automorphism of $\FI$ is inner. This allowed us to obtain the classification of involutions on the idealization of the incidence space $\I$ over the finitary incidence algebra $\FI$ adding the hypothesis that every derivation from $\FI$ to $\I$ is inner. In this paper, we consider a field $K$ of characteristic different from $2$ and a non-connected partially ordered set $X$ such that every multiplicative automorphism of $\FI$ is inner, and we give the classification of involutions on $\FI$. For this, we use some results and ideas from \cite{BFS11, BFS14}.

Our work is organized as follows. In Section~\ref{preli} we recall some definitions and results about partially ordered sets and finitary incidence algebras, and we also present new results about automorphisms of finitary incidence algebras. In Section~\ref{SRestric} we consider  a partially ordered set $X$ written as the disjoint union of its connected components, and we give some properties of restrictions of automorphisms, anti-automorphisms and involutions of $X$ and $\FI$. In Section~\ref{SClassification} we consider a field $K$ of characteristic different from $2$ and a non-connected partially ordered set $X$ such that every multiplicative automorphism of $\FI$ is inner and we give  necessary and sufficient conditions for two involutions on $\FI$ to be equivalent via inner automorphism (Theorems~\ref{teoeqtoRholambda} and \ref{teoiffinnerJ3}). So, we use this classification to obtain the general classification of involutions on $\FI$ (Theorems~\ref{teogeralclassiinvolu} and \ref{teoiffgeral}).

\section{Preliminaries }\label{preli}

\subsection{Posets}

Let $(X, \leq)$ be a partially ordered set (poset, for short). Two elements $x,y \in X$ are \emph{comparable} if either $x \leq y$ or $y \leq x$. If an element $x_0\in X$ is comparable to every element of $X$, then it is called an \emph{all-comparable} element. Any subset $Y$ of $X$ is also a poset with the relation $\leq$ restricted to the elements of $Y$, and in this case $Y$ is said to be a \emph{subposet} of $X$. A subposet $Y$ of $X$ is a \emph{chain} if any two elements of $Y$ are comparable. On the other hand, $Y$ is a \emph{antichain} if any two distinct elements of $Y$ are not comparable.

Given $x,y \in X$, the \emph{interval} from $x$ to $y$ is the set $[x,y]= \{z \in X :$ $x \leq z \leq y\}$. If all intervals of $X$ are finite, then
$X$ is said to be \emph{locally finite}.

The elements $x, y \in X$ are \emph{connected} if for some positive integer $n$, there exists $x=x_0,x_1,\ldots, x_n=y$ in $X$ with $x_i$ and $x_{i+1}$ comparable for $i=0,1,\ldots,n-1$. It is easy to see that the connectedness of elements of $X$ is an equivalence relation whose equivalence classes  are called \emph{connected components} of $X$. Then $X$ can be written as the disjoint union of its connected components. When $X$ has only one connected component it is said to be \emph{connected}.

Let $X$ and $Y$ be posets. An \emph{isomorphism} (resp.~\emph{anti-isomorphism}) from $X$ to $Y$ is a bijective map $\varphi:X \to Y$ that satisfies the following property for any $x,y \in X$:
$$ x\leq y \Leftrightarrow \varphi(x) \leq \varphi(y) \text{ (resp.}~\varphi(y) \leq \varphi(x)\text{)}.$$
When $X=Y$, $\varphi$ is also called an \emph{automorphism} (resp.~\emph{anti-automorphism}). An anti-automorphism $\varphi:X \to X$ of order $2$ is an \emph{involution} on $X$.

Let $X$ be a poset with an involution $\lambda$. By \cite[Theorem~4.7]{BFS14}, there is a triple of disjoint subsets $(X_1, X_2, X_3)$ of $X$ with $X = X_1 \cup X_2 \cup X_3$ satisfying:
\begin{enumerate}
\item[(i)] $X_3=\{ x \in X : \lambda(x) = x\}$;
\item[(ii)] if $x \in X_1$ ($X_2$), then $\lambda(x) \in X_2$ ($X_1$);
\item[(iii)] if $x \in X_1$ ($X_2$) and $y \leq x$ ($x \leq y$), then $y \in X_1$ ($X_2$).
\end{enumerate}
In this case, $(X_1, X_2, X_3)$ is called a $\lambda$-\emph{decomposition of} $X$.

\subsection{Finitary incidence algebras}

Let $K$ be a field. Throughout the paper $K$-algebras are associative with unity. The center of a $K$-algebra $A$ is denoted by $Z(A)$ and the set of invertible elements of $A$ is denoted by $U(A)$. For each $a \in U(A)$, $\Psi_a$ denotes the inner automorphism defined by $a$, i.e., $\Psi_a:A\to A$ is such that $\Psi_a(x) = axa^{-1}$ for all $x \in A$. The group of automorphisms of $A$ is denoted by $\Aut(A)$ and the subgroup of inner automorphisms is denoted by $\Inn(A)$.

\emph{Involutions} on a $K$-algebra $A$ are ($K$-linear) anti-automorphisms of order $2$. Two involutions $\rho_1$ and $\rho_2$ are \emph{equivalent} if there exists $\phi\in\Aut(A)$ such that $\phi\circ\rho_1=\rho_2\circ\phi$.

Let $X$ be a poset and let $K$ be a field. The \emph{incidence space} $\I$ of $X$ over $K$ is the $K$-space of functions $f:X\times X\to K$ such that $f(x,y)=0$ if $x\nleq y$. Let $\FI$ be the subspace of functions $f\in \I$ such that for any $x\leq y$ in $X$, there is only a finite number of subintervals $[u,v]\subseteq[x,y]$ such that $u\neq v$ and $f(u,v)\neq 0$. Then $\FI$ is a $K$-algebra with the product (convolution)
		$$(fg)(x,y)=\sum_{x\leq z\leq y}f(x,z)g(z,y),$$
for any $f, g\in \FI$, called the \emph{finitary incidence algebra} of $X$ over $K$. Furthermore, $\I$ is an $\FI$-bimodule~\cite{KN}. If $X$ is locally finite, then $\FI=\I$ is the \emph{incidence algebra} of $X$ over $K$~\cite{Rota}. The unity of $\FI$ is the function $\delta$ defined by $\delta(x,y)=1$ if $x=y$, and $\delta(x,y)=0$ otherwise. An element $f \in \FI$ is invertible if and only if $f(x,x) \neq 0$ for all $x \in X$, by \cite[Theorem~2]{KN}. A function $f \in \FI$ is \emph{diagonal} if $f(x,y)=0$ whenever $x \neq y$. A diagonal function $f$ is \emph{constant on a set $Y$} if $f(x,x)=f(y,y)$ for all $x,y \in Y$.

\begin{proposition}\label{propcenFI}
Let $X$ be a poset and $K$ a field. Then $Z(\FI)$ is the set of all diagonal functions which are constant on each connected component of $X$. Consequently, $X$ is connected if and only if $\FI$ is a central algebra.
\end{proposition}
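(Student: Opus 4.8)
The plan is to prove the center description by two inclusions and then read off the corollary. For the easy inclusion, I would take a diagonal function $f$ with $f(x,x)=c_x$ that is constant on each connected component, and show it commutes with every $g\in\FI$. Since $f$ is diagonal, only the term $z=x$ survives in $(fg)(x,y)=\sum_{x\le z\le y}f(x,z)g(z,y)$, giving $(fg)(x,y)=c_x\,g(x,y)$; symmetrically $(gf)(x,y)=c_y\,g(x,y)$. Whenever $g(x,y)\neq 0$ one has $x\le y$, so $x$ and $y$ are comparable, hence in the same connected component, forcing $c_x=c_y$; thus $fg=gf$. One first notes that such an $f$ genuinely lies in $\FI$: being diagonal, it has no off-diagonal support at all, so the finitary condition is vacuous.

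For the reverse inclusion I would test an arbitrary central $f$ against the point functions $e_{xy}$ defined, for $x\le y$, by $e_{xy}(u,v)=1$ if $(u,v)=(x,y)$ and $0$ otherwise; each has finite support and so lies in $\FI$. Commuting $f$ with the diagonal idempotent $e_{xx}$ removes the off-diagonal entries: a short convolution computation gives $(e_{xx}f)(x,y)=f(x,y)$ while $(fe_{xx})(x,y)=0$ for every $y\neq x$ with $x\le y$, whence $f(x,y)=0$ and $f$ is diagonal. Writing $f(x,x)=c_x$, I would then commute $f$ with $e_{xy}$ for a comparable pair $x\le y$: diagonality yields $fe_{xy}=c_x e_{xy}$ and $e_{xy}f=c_y e_{xy}$, so that $c_x=c_y$. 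Since equality of the diagonal values propagates along comparabilities, chaining along a connecting path shows $f$ is constant on each connected component, completing the characterization of $Z(\FI)$.

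The corollary is then immediate. If $X$ is connected there is a single component, so every element of $Z(\FI)$ is a scalar multiple of $\delta$, i.e. $Z(\FI)=K\delta$ and $\FI$ is central; conversely, if $X$ has at least two components, the diagonal function equal to $1$ on one component and $0$ elsewhere is a central element outside $K\delta$, so $\FI$ is not central. I expect no serious obstacle here: the only points needing care are the bookkeeping in the convolution sums and the verification that both the $e_{xy}$ and the candidate central elements satisfy the finitary condition, both of which are routine.
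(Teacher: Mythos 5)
Your argument is correct, and it is essentially the standard proof that the paper outsources to \cite[Theorem~1.3.13]{SO97}: testing a central element against the functions $e_{xx}$ and $e_{xy}$ to force diagonality and constancy along comparable pairs, then chaining along a connecting path. The only adaptation needed for $\FI$ rather than $\I$ is the (correctly handled) observation that diagonal functions and the finitely supported $e_{xy}$ satisfy the finitary condition, so nothing further is required.
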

\begin{proof}
The proof is exactly the same as in \cite[Theorem~1.3.13]{SO97}.
\end{proof}

Let $X=\bigcup_{j \in J}X_j$ be the decomposition of the poset $X$ into its connected components. For a nonempty subset $L$ of $J$, we set $X_{L}= \bigcup_{j \in L}X_j$ and $f_{L}= f|_{X_L\times X_L}$ for each $f\in \I$. Note that $f_L\in \IL$, moreover, $f_L\in \FIL$ whenever $f\in \FI$. When $L=\{l\}$ we denote $f_{L}=f_l$. The following result is well-known and easy to prove.

\begin{proposition}
Let $X= \bigcup_{j \in J}X_j$ be the decomposition of the poset $X$ into its connected components. The map
$$\label{eqiso}
     \begin{array}{ccc}
  \phi: \I & \rightarrow & \displaystyle\prod_{j \in J}\Ij\\
    \quad  f   & \mapsto & (f_j)_{j \in J}
 \end{array}
$$
is an isomorphism of $K$-vector spaces and
\begin{equation*}\label{iso_FI_prod_FIX_j}
\phi|_{\FI}: \FI  \rightarrow \prod_{j \in J} \FIj
\end{equation*}
is an isomorphism of $K$-algebras.
\end{proposition}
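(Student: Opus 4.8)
The plan is to exploit the single structural fact that underlies the whole statement: if $x$ and $y$ lie in distinct connected components of $X$, then they are incomparable, so every $f \in \I$ vanishes on $X_i \times X_j$ for $i \neq j$. Thus a function in $\I$ is nothing more than the data of its restrictions to the diagonal blocks $X_j \times X_j$, and this is exactly what $\phi$ records. First I would check that $\phi$ is well defined and $K$-linear. For $f \in \I$ and $x,y \in X_j$, comparability in $X_j$ coincides with comparability in $X$, so $f_j(x,y) = f(x,y) = 0$ whenever $x \nleq y$ in $X_j$; hence $f_j \in \Ij$. Linearity of each restriction map is immediate.

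Next, bijectivity. Injectivity follows from the vanishing across components: if all $f_j = 0$, then $f(x,y) = 0$ both when $x,y$ share a component and when they do not, so $f = 0$. For surjectivity I would build an explicit inverse: given $(g_j)_{j \in J}$ with $g_j \in \Ij$, set $f(x,y) = g_j(x,y)$ if $x,y \in X_j$ for some $j$, and $f(x,y) = 0$ otherwise; one then checks that $f \in \I$ (the defining vanishing condition holds block by block) and that $\phi(f) = (g_j)_{j \in J}$.

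Then I turn to the algebra part. The key local observation is that for $x,y \in X_j$ the interval $[x,y]$ computed in $X$ lies entirely in $X_j$, since any $z$ with $x \leq z \leq y$ is connected to $x$. This yields two things at once. First, it identifies the finitary condition for $f_j$ on $X_j$ with the corresponding condition for $f$ restricted to intervals inside $X_j$, so $\phi$ maps $\FI$ into $\prod_{j \in J} \FIj$ and the inverse constructed above maps $\prod_{j \in J} \FIj$ back into $\FI$; hence $\phi|_{\FI}$ is a linear bijection onto $\prod_{j \in J} \FIj$. Second, in the convolution $(fg)(x,y) = \sum_{x \leq z \leq y} f(x,z) g(z,y)$ the summation index $z$ is forced into $X_j$, so the sum equals $\sum_{x \leq z \leq y,\, z \in X_j} f_j(x,z) g_j(z,y) = (f_j g_j)(x,y)$, giving $(fg)_j = f_j g_j$. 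Together with $\phi(\delta) = (\delta_j)_{j \in J}$, where $\delta_j$ is the identity of $\FIj$, this shows that $\phi|_{\FI}$ is a unital $K$-algebra homomorphism, hence an isomorphism.

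I expect no serious obstacle; the only point requiring a moment's care is the two-directional preservation of the finitary condition, which rests entirely on the interval identity $[x,y]_X = [x,y]_{X_j}$ for $x,y \in X_j$. Everything else is bookkeeping on the block-diagonal decomposition of $\I$ induced by incomparability across distinct components.
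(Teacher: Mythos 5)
Your proof is correct and is precisely the standard argument the paper has in mind when it declares this result ``well-known and easy to prove'' without supplying details: everything reduces to the facts that elements of distinct components are incomparable and that any interval $[x,y]$ lies entirely within one component. All the steps you identify (well-definedness, the glued inverse, the two-way preservation of the finitary condition, and $(fg)_j = f_j g_j$) are exactly the right bookkeeping, so there is nothing to add.
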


When convenient, we will identify $f\in \I$ with $(f_j)_{j \in J}\in \prod_{j \in J} \Ij$.

The next corollary follows directly from the proposition above.

\begin{corollary}\label{cor1}
Let $X=\bigcup_{j \in J}X_j$ be the decomposition of the poset $X$ into its connected components. Let $L$ be a nonempty subset of $J$ and $L^c=J-L$. Then, for any $f,g\in \I$,
\begin{enumerate}
\item[(i)] $(f+g)_L=f_L+g_L$;
\item[(ii)] $(af)_L=af_L$ for all $a\in K$;
\item[(iii)] $(fg)_L=f_Lg_L$ if $f,g\in \FI$;
\item[(iv)] $f \in U(\FI) \Leftrightarrow f_L \in U(\FIL)$ and $f_{L^c} \in U(\FILC)$. In this case, $(f^{-1})_L=(f_L)^{-1}$;
\item[(v)] $f \in Z(\FI) \Leftrightarrow f_L \in Z(\FIL)$ and $f_{L^c} \in Z(\FILC)$.
\end{enumerate}
\end{corollary}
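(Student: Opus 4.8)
The plan is to deduce every item from the product decomposition furnished by the preceding proposition, treating the restriction map $f \mapsto f_L$ as a coordinate projection. First I would note that distinct connected components of $X$ admit no comparabilities between their elements, so the connected components of the subposet $X_L = \bigcup_{j \in L} X_j$ are exactly the $X_j$ with $j \in L$. Applying the proposition to $X_L$ therefore gives a $K$-linear isomorphism $\phi_L : \IL \to \prod_{j \in L} \Ij$, $g \mapsto (g_j)_{j \in L}$, which restricts to a $K$-algebra isomorphism on $\FIL$, and similarly for $L^c$.

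The one genuine verification is that $(\cdot)_L$ corresponds, under $\phi$ and $\phi_L$, to the canonical projection $\pi_L : \prod_{j \in J} \Ij \to \prod_{j \in L} \Ij$ that discards the coordinates outside $L$. For this it suffices to observe that for each $j \in L$ one has $(f_L)_j = f|_{X_j \times X_j} = f_j$, whence $\phi_L(f_L) = (f_j)_{j \in L} = \pi_L(\phi(f))$; that is, $\phi_L \circ (\cdot)_L = \pi_L \circ \phi$. This identity is immediate from the definitions but is the hinge on which everything else turns.

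Granting it, items (i) and (ii) follow because $\phi$, $\phi_L$ and $\pi_L$ are all $K$-linear, so $(\cdot)_L$ is $K$-linear; and item (iii) follows because, on $\FI$, the maps $\phi$ and $\phi_L$ are algebra isomorphisms while $\pi_L$ is a homomorphism of the product algebras, making $(\cdot)_L|_{\FI}$ multiplicative. For (iv) and (v) I would split the index set as $J = L \sqcup L^c$, identifying $\prod_{j \in J} \FIj$ with $\left( \prod_{j \in L} \FIj \right) \times \left( \prod_{j \in L^c} \FIj \right)$. Then (iv) is just the fact that an element of a direct product of unital algebras is invertible precisely when each coordinate is, with inverses computed coordinatewise (giving $(f^{-1})_L = (f_L)^{-1}$), and (v) follows from the centre of a direct product being the product of the centres, combined with Proposition~\ref{propcenFI}. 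I expect no real obstacle: the corollary is a bookkeeping consequence of the decomposition, the only point demanding attention being the restriction to $\FI$ in (iii), since the convolution product need not be defined on all of $\I$.
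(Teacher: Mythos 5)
Your proposal is correct and matches the paper's intent: the paper states that the corollary ``follows directly from the proposition above,'' i.e.\ from viewing $f\mapsto f_L$ as the coordinate projection under the isomorphism $\phi$, which is exactly the argument you spell out. The details you supply (the identity $\phi_L\circ(\cdot)_L=\pi_L\circ\phi$, invertibility and centres computed coordinatewise in the product) are the routine verifications the authors omit.
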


\begin{corollary}\label{cor2}
Let $X=\bigcup_{j \in J}X_j$ be the decomposition of the poset $X$ into its connected components. Let $L$ be a nonempty subset of $J$ and $u,v\in U(\FI)$. Then
\begin{enumerate}
\item[(i)] $[\Psi_u(f)]_L=\Psi_{u_L}(f_L)$ for all $f\in \FI$;
\item[(ii)] $\Psi_u =\Psi_v \Leftrightarrow \Psi_{u_j} =\Psi_{v_j} \ \forall j \in J \Leftrightarrow \Psi_{u_L} =\Psi_{v_L} \text{ and } \Psi_{u_{L^c}} =\Psi_{v_{L^c}}$.
\end{enumerate}
\end{corollary}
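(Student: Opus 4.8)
The plan is to reduce both parts to the already-established multiplicativity of the restriction map (Corollary~\ref{cor1}(iii)) together with the behaviour of restriction on inverses (Corollary~\ref{cor1}(iv)). For part (i), I would simply compute $[\Psi_u(f)]_L$ directly: by definition $\Psi_u(f) = u f u^{-1}$, so applying Corollary~\ref{cor1}(iii) twice gives $(u f u^{-1})_L = u_L\, f_L\, (u^{-1})_L$. Then Corollary~\ref{cor1}(iv), which asserts $(u^{-1})_L = (u_L)^{-1}$ whenever $u \in U(\FI)$, lets me rewrite this as $u_L\, f_L\, (u_L)^{-1} = \Psi_{u_L}(f_L)$. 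Note that I must first invoke Corollary~\ref{cor1}(iv) to know $u_L \in U(\FIL)$, so that $\Psi_{u_L}$ is a well-defined inner automorphism of $\FIL$; this is where the hypothesis $u \in U(\FI)$ is used. There is no real obstacle here, only the bookkeeping of applying the previous corollary in the right order.

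For part (ii), I would prove the chain of equivalences by handling the two biconditionals. The equivalence $\Psi_u = \Psi_v \Leftrightarrow \Psi_{u_j} = \Psi_{v_j}\ \forall j \in J$ is the special case $L = \{j\}$ of the remaining equivalence, applied to every singleton, so it suffices to show that $\Psi_u = \Psi_v$ holds if and only if $\Psi_{u_L} = \Psi_{v_L}$ and $\Psi_{u_{L^c}} = \Psi_{v_{L^c}}$ simultaneously. The forward direction is immediate from part (i): if $\Psi_u = \Psi_v$, then for every $f \in \FI$ we have $\Psi_{u_L}(f_L) = [\Psi_u(f)]_L = [\Psi_v(f)]_L = \Psi_{v_L}(f_L)$, and since every element of $\FIL$ arises as $f_L$ for some $f \in \FI$ (extend by zero outside $X_L \times X_L$, or use the isomorphism of the preceding proposition), we conclude $\Psi_{u_L} = \Psi_{v_L}$; the same argument with $L^c$ gives $\Psi_{u_{L^c}} = \Psi_{v_{L^c}}$.

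The reverse direction is the only step requiring slight care. Assume $\Psi_{u_L} = \Psi_{v_L}$ and $\Psi_{u_{L^c}} = \Psi_{v_{L^c}}$. Given any $f \in \FI$, I want $\Psi_u(f) = \Psi_v(f)$, and by the injectivity of the restriction decomposition $f \mapsto (f_L, f_{L^c})$ it suffices to check equality of the two restrictions separately. Applying part (i) on each piece gives $[\Psi_u(f)]_L = \Psi_{u_L}(f_L) = \Psi_{v_L}(f_L) = [\Psi_v(f)]_L$ and likewise for $L^c$, whence $\Psi_u(f) = \Psi_v(f)$. The main thing to be careful about is the surjectivity claim used in the forward direction, namely that the restriction $\FI \to \FIL$ is onto; this follows directly from the algebra isomorphism $\FI \cong \prod_{j \in J}\FIj$ of the preceding proposition, since an element of $\FIL$ corresponds to a tuple supported on the indices in $L$ and extended by zero elsewhere, which lies in the product and hence comes from some $f \in \FI$. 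With that observation in hand, the whole corollary is a formal consequence of Corollary~\ref{cor1}.
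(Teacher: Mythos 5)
Your proof is correct, and for part (i) it is essentially the computation the paper intends: apply Corollary~\ref{cor1}(iii) twice and Corollary~\ref{cor1}(iv) to get $(ufu^{-1})_L=u_Lf_L(u_L)^{-1}$. For part (ii), though, you take a genuinely different route. The paper's proof is a one-line appeal to Corollary~\ref{cor1} together with Proposition~2.2 of \cite{ER2}, which characterizes equality of inner automorphisms by the condition that $vu^{-1}$ be a central unit; combined with Corollary~\ref{cor1}(iv) and (v), that condition transfers immediately to the restrictions $u_L,v_L$ and $u_j,v_j$. You instead argue directly at the level of automorphisms, using that restriction $\FI\to\FIL$ is surjective (for the forward implications) and that $f\mapsto(f_L,f_{L^c})$, resp.\ $f\mapsto(f_j)_{j\in J}$, is injective (for the reverse ones). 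This is more self-contained --- it needs nothing beyond Corollary~\ref{cor1} and the product decomposition --- at the cost of having to justify surjectivity of restriction, which you do correctly (extension by zero lands in $\FI$ because any interval $[x,y]$ lies in a single component). One small logical slip: the equivalence $\Psi_u=\Psi_v\Leftrightarrow\Psi_{u_j}=\Psi_{v_j}$ for all $j$ is not literally the special case $L=\{j\}$ of the two-block equivalence, since knowing $\Psi_{u_j}=\Psi_{v_j}$ for every singleton does not formally give you $\Psi_{u_{\{j\}^c}}=\Psi_{v_{\{j\}^c}}$, which your reduction would require. This costs nothing, however: the very argument you give for the reverse implication (apply part (i) componentwise and use injectivity of $f\mapsto(f_j)_{j\in J}$) proves the $J$-indexed version directly, so the proof stands as written once that reduction is replaced by the direct argument.
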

\begin{proof}
It follows from the previous corollary and \cite[Proposition 2.2]{ER2}.
\end{proof}

An element $\sigma \in \I$ such that $\sigma(x,y)\neq 0$ for all $x \leq y$, and  $\sigma(x,y)\sigma(y,z)=\sigma(x,z)$  whenever $x\leq y\leq z$ is called \emph{multiplicative}. For each map $h: X \to K^*$, we define a \emph{fractional} element $\tau_h \in \I$ by $\tau_h(x,y)= \dfrac{h(x)}{h(y)}$ for all $x \leq y$ in $X$. Its easy to check that any fractional element is multiplicative.

\begin{lemma}\label{lemmaiffmultfrac}
Let $X=\bigcup_{j \in J}X_j$ be the decomposition of the poset $X$ into its connected components and let $\sigma \in \I$. Then $\sigma$ is fractional (multiplicative) if and only if $\sigma_j\in \Ij$ is fractional (multiplicative) for all $j \in J$.
\end{lemma}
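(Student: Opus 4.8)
The plan is to exploit the single structural fact that makes the statement nearly automatic: since the $X_j$ are the connected components of $X$, any two comparable elements lie in the same component. Indeed, if $x\leq y$ then $x$ and $y$ are comparable, hence connected, hence belong to a common $X_j$; and if $x\leq y\leq z$, then transitivity of connectedness places all three in the same $X_j$. Consequently, both defining conditions for a multiplicative element --- nonvanishing on comparable pairs and the relation $\sigma(x,y)\sigma(y,z)=\sigma(x,z)$ for $x\leq y\leq z$ --- only ever involve elements drawn from a single component, so the global conditions decouple completely across $j\in J$.

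For the multiplicative equivalence I would argue both directions directly. For the forward direction, given $\sigma$ multiplicative and $x\leq y$ in a fixed $X_j$, I note that $\sigma_j(x,y)=\sigma(x,y)\neq 0$, and for $x\leq y\leq z$ in $X_j$ the identity for $\sigma$ restricts verbatim to $\sigma_j$; thus each $\sigma_j$ is multiplicative. For the converse, given that every $\sigma_j$ is multiplicative and given $x\leq y\leq z$ in $X$, I use the observation above to place $x,y,z$ in a common component $X_j$, whence $\sigma(x,y)=\sigma_j(x,y)\neq 0$ and the required identity follows from the one for $\sigma_j$.

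For the fractional equivalence, the forward direction is an immediate restriction of the witnessing function: if $\sigma=\tau_h$ for some $h\colon X\to K^*$, then setting $h_j=h|_{X_j}$ gives $\sigma_j(x,y)=\sigma(x,y)=h(x)/h(y)=h_j(x)/h_j(y)$ for $x\leq y$ in $X_j$, so $\sigma_j=\tau_{h_j}$ is fractional. The converse is the only place where a little care is needed: I would take witnesses $h_j\colon X_j\to K^*$ with $\sigma_j=\tau_{h_j}$ and glue them into a single map $h\colon X\to K^*$ by $h(x)=h_j(x)$ for $x\in X_j$, which is well defined because the $X_j$ partition $X$. Then for $x\leq y$, which necessarily lie in a common $X_j$, one obtains $\sigma(x,y)=\sigma_j(x,y)=h_j(x)/h_j(y)=h(x)/h(y)$, so $\sigma=\tau_h$ is fractional.

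The main --- and essentially only --- subtlety is that the gluing in the converse of the fractional case requires no compatibility condition among the $h_j$, precisely because comparability never crosses components: the partition is genuinely disjoint, so there are no overlaps on which the local witnesses would have to agree, and the glued map automatically takes values in $K^*$. I expect this to be the only step worth spelling out, since the disjointness of the connected components does all the real work and everything else is a direct verification of the defining conditions.
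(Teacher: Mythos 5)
Your proposal is correct and follows essentially the same route as the paper: both reduce everything to the observation that comparable elements lie in a single connected component, verify the multiplicative condition componentwise, and handle the fractional case by restricting the witness $h$ for one direction and gluing the local witnesses $h_j$ into a global map for the other. Your write-up merely spells out the details that the paper dispatches as ``clearly.''
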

\begin{proof}
Clearly, $\sigma$ is multiplicative if and only if $\sigma_j$ is multiplicative for all $j \in J$, because if $x \leq y \leq z$, then $x,y$ and $z$ are in a same connected component of $X$.

If $\sigma = \tau_h$ for some $h:X\to K^*$, then $\sigma_j = \tau_{h_j}$ for all $j \in J$, where $h_j= h|_{X_j}$. Conversely, if for each $j\in J$ there is $g_j:X_j\to K^*$ such that $\sigma_j= \tau_{g_j}$, then $\sigma = \tau_g$ where $g:X\to K^*$ is given by $g(x) = g_j(x)$ if $x \in X_j$.
\end{proof}

For each multiplicative element $\sigma \in \I$ consider $M_\sigma:\FI\to \FI$ defined by $M_{\sigma}(f)(x,y)=\sigma(x,y)f(x,y)$, for all $f \in \FI$ and $x,y \in X$. Then $M_{\sigma}$ is an automorphism which is said to be a \emph{multiplicative} automorphism. We denote the group of all multiplicative automorphisms of $\FI$ by $\Mult(\FI)$. The subset $\Frac(\FI)= \{M_{\sigma}: \sigma$ is fractional$\}$ is a subgroup of $\Mult(\FI)$ and its elements are called \emph{fractional} automorphisms.

As in the case where $X$ is locally finite, we have the following relationship between the inner, multiplicative and fractional automorphisms of $\FI$.

\begin{proposition}\label{InnMultFrac}
For any field $K$ and any poset $X$,
$$\Inn(\FI) \cap \Mult(\FI)= \Frac(\FI).$$
\end{proposition}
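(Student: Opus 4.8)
The plan is to establish the two inclusions separately. For $\Frac(\FI)\subseteq\Inn(\FI)\cap\Mult(\FI)$, a fractional automorphism is multiplicative by definition, so the only thing to show is that it is inner. Given $h\colon X\to K^*$, I would introduce the diagonal function $d\in\FI$ with $d(x,x)=h(x)$; since $h$ never vanishes, $d(x,x)\neq0$ for all $x$ and hence $d\in U(\FI)$ by the invertibility criterion of \cite{KN}, with $d^{-1}$ diagonal and $d^{-1}(x,x)=h(x)^{-1}$. A direct convolution, in which the diagonality of $d$ and $d^{-1}$ collapses every sum to a single term, gives $\Psi_d(f)(x,y)=h(x)\,f(x,y)\,h(y)^{-1}=\tau_h(x,y)f(x,y)$ for all $f\in\FI$ and $x\le y$. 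Thus $\Psi_d=M_{\tau_h}$ and $M_{\tau_h}\in\Inn(\FI)$.

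For the reverse inclusion $\Inn(\FI)\cap\Mult(\FI)\subseteq\Frac(\FI)$, suppose $M_\sigma=\Psi_u$ for some $u\in U(\FI)$ and some multiplicative $\sigma$. The key step is to test this identity on the elementary functions: for each $x\le y$ let $e_{xy}\in\FI$ be the function whose only nonzero value is $e_{xy}(x,y)=1$. On one hand $M_\sigma(e_{xy})=\sigma(x,y)\,e_{xy}$; on the other hand, computing the $(x,y)$-entry of $u\,e_{xy}\,u^{-1}$ collapses to the single term $u(x,x)\,e_{xy}(x,y)\,u^{-1}(y,y)$. Comparing these entries, and using $u^{-1}(y,y)=u(y,y)^{-1}$, yields $\sigma(x,y)=u(x,x)/u(y,y)$ for every $x\le y$. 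Setting $h(x)=u(x,x)$, which lies in $K^*$ because $u$ is invertible, we obtain $\sigma=\tau_h$, i.e.\ $M_\sigma\in\Frac(\FI)$.

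The conceptual content — and the only place where innerness is genuinely used — is the extraction of $\sigma(x,y)$ from the $(x,y)$-entry of the conjugate $u\,e_{xy}\,u^{-1}$: this is exactly what forces the a priori merely multiplicative $\sigma$ to take the coboundary form $h(x)/h(y)$, so that the extra hypothesis of being inner collapses $\Mult(\FI)$ onto $\Frac(\FI)$. I do not expect a serious obstacle; the computations are routine convolutions, and I would only take care to justify that $e_{xy}\in\FI$ (immediate, since it has a single nonzero off-diagonal entry) and that the diagonal of $u^{-1}$ is the reciprocal of the diagonal of $u$ (which follows by evaluating $(uu^{-1})(y,y)=\delta(y,y)=1$). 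Note that the argument is valid for an arbitrary poset $X$, so the connected-components machinery of Lemma~\ref{lemmaiffmultfrac} and Corollary~\ref{cor2} is not needed here, although one could alternatively reduce to the connected case through those results.
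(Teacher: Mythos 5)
Your proof is correct and takes essentially the same route as the paper, which simply defers to the standard argument of \cite[Proposition~7.3.3]{SO97}: conjugation by the diagonal unit $d$ with $d(x,x)=h(x)$ for one inclusion, and evaluation of $\Psi_u=M_\sigma$ on the single-entry functions $e_{xy}$ to force $\sigma(x,y)=u(x,x)/u(y,y)$ for the other. Nothing further is needed.
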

\begin{proof}
It is analogous to the proof of \cite[Proposition~7.3.3]{SO97}.
\end{proof}

\begin{proposition}\label{MultMult}
Let $X=\bigcup_{j \in J}X_j$ be the decomposition of the poset $X$ into its connected components. The multiplicative automorphism $M_\sigma$ of $\FI$ is inner if and only if the multiplicative automorphism $M_{\sigma_j}$ of $\FIj$ is inner for each $j \in J$.
\end{proposition}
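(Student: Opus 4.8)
The plan is to reduce everything to the product decomposition $\FI \cong \prod_{j \in J}\FIj$ and to the behaviour of multiplicative and inner automorphisms under restriction to connected components, as recorded in Corollaries~\ref{cor1} and \ref{cor2}. The first step I would take is to observe that $M_\sigma$ respects restriction: for any $f \in \FI$ and $x,y \in X_j$ we have $[M_\sigma(f)]_j(x,y) = \sigma(x,y)f(x,y) = \sigma_j(x,y)f_j(x,y)$, so that $[M_\sigma(f)]_j = M_{\sigma_j}(f_j)$. Here $\sigma_j$ is multiplicative by Lemma~\ref{lemmaiffmultfrac}, so $M_{\sigma_j}$ is a genuine multiplicative automorphism of $\FIj$. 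This identity is the bridge between the global and the componentwise pictures.

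For the ``only if'' direction I would assume $M_\sigma = \Psi_u$ for some $u \in U(\FI)$. By Corollary~\ref{cor1}(iv) each $u_j \in U(\FIj)$, and by Corollary~\ref{cor2}(i) we have $[\Psi_u(f)]_j = \Psi_{u_j}(f_j)$ for all $f$. Combining this with the bridge identity gives $M_{\sigma_j}(f_j) = \Psi_{u_j}(f_j)$ for every $f \in \FI$; since $f_j$ ranges over all of $\FIj$ as $f$ ranges over $\FI$, this yields $M_{\sigma_j} = \Psi_{u_j}$, so each $M_{\sigma_j}$ is inner.

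For the ``if'' direction I would assume $M_{\sigma_j} = \Psi_{u_j}$ with $u_j \in U(\FIj)$ for each $j$ and assemble $u = (u_j)_{j \in J}$, which under the isomorphism $\FI \cong \prod_{j}\FIj$ is an invertible element of $\FI$ (a unit in a product being exactly a componentwise unit). Then, for every $f$, Corollary~\ref{cor2}(i) together with the bridge identity gives $[\Psi_u(f)]_j = \Psi_{u_j}(f_j) = M_{\sigma_j}(f_j) = [M_\sigma(f)]_j$ for all $j$; as an element of $\FI$ is determined by its components, $\Psi_u = M_\sigma$, and $M_\sigma$ is inner.

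I do not expect a serious obstacle: the only point needing a little care is the ``if'' direction, where one must be sure the tuple $(u_j)_j$ really defines an invertible element of $\FI$ even when $J$ is infinite, which is precisely what the product decomposition guarantees. As a sanity check — and an alternative, shorter route — one can argue through fractional automorphisms instead: since $M_\sigma$ is multiplicative, Proposition~\ref{InnMultFrac} gives that $M_\sigma$ is inner if and only if $\sigma$ is fractional, and likewise $M_{\sigma_j}$ is inner if and only if $\sigma_j$ is fractional; Lemma~\ref{lemmaiffmultfrac} then says $\sigma$ is fractional if and only if every $\sigma_j$ is, which closes the equivalence.
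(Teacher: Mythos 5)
Your proof is correct, but your main argument takes a different route from the paper's. The paper disposes of this proposition in one line: since $M_\sigma$ is multiplicative, Proposition~\ref{InnMultFrac} says it is inner exactly when it is fractional, i.e.\ when $\sigma$ is fractional, and Lemma~\ref{lemmaiffmultfrac} says this happens exactly when every $\sigma_j$ is fractional, which by Proposition~\ref{InnMultFrac} again is equivalent to every $M_{\sigma_j}$ being inner. This is precisely the ``alternative, shorter route'' you sketch in your last sentence, so you have in fact reproduced the paper's proof as an afterthought. Your primary argument instead works directly with the product decomposition: the bridge identity $[M_\sigma(f)]_j=M_{\sigma_j}(f_j)$, combined with Corollary~\ref{cor2}~(i) and the surjectivity of $f\mapsto f_j$, handles one direction, and assembling $u=(u_j)_{j\in J}$ (a unit of $\FI$ because units in the product are componentwise units, Corollary~\ref{cor1}~(iv)) handles the other. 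Both arguments are sound. The direct computation is more self-contained and makes explicit where the infinite index set could cause trouble (it does not, thanks to the product structure), while the paper's route is shorter because it delegates all the work to the two preceding results, which were stated for exactly this purpose; it also avoids having to argue that $f_j$ ranges over all of $\FIj$. Either version would be acceptable here.
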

\begin{proof}
It follows directly from Lemma~\ref{lemmaiffmultfrac} and Proposition~\ref{InnMultFrac}.
\end{proof}

\begin{corollary}\label{coroiffMultInner}
Let $X=\bigcup_{j \in J}X_j$ be the decomposition of the poset $X$ into its connected components. Then $\Mult(\FI) \subseteq \Inn(\FI)$ if and only if $\Mult(\FIj) \subseteq \Inn(\FIj)$ for all $j \in J$.
\end{corollary}
\begin{proof}
Suppose $\Mult(\FI) \subseteq \Inn(\FI)$. Let $i \in J$ and $\sigma \in \Ii$ be a multiplicative element. By Lemma~\ref{lemmaiffmultfrac}, $\tau \in \I$ defined by
$$\tau(x,y) = \begin{cases}
\sigma(x,y) & \text{if } x \leq y \in X_i\\
1 & \text{if } x \leq y \in X_j, j\neq i \\
0 & \text{if } x\nleq y
\end{cases}$$
is multiplicative. Thus, $M_\tau \in \Inn(\FI)$, therefore $M_{\tau_i}\in \Inn(\FIi)$, by Proposition~\ref{MultMult}. Since $M_{\sigma}=M_{\tau_i}$, then  $M_{\sigma}\in \Inn(\FIi)$.

The converse follows directly from Proposition~\ref{MultMult}.
\end{proof}

\begin{corollary}\label{corolocalfinito}
Let $X=\bigcup_{j \in J}X_j$ be the decomposition of the poset $X$ into its connected components. If $X_j$ has an all-comparable element for each $j \in J$, then $\Mult(\FI) \subseteq \Inn(\FI)$.
\end{corollary}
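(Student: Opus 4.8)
The plan is to reduce the statement to a single connected component and there exhibit every multiplicative element as a fractional one. By Corollary~\ref{coroiffMultInner}, the inclusion $\Mult(\FI) \subseteq \Inn(\FI)$ holds if and only if $\Mult(\FIj) \subseteq \Inn(\FIj)$ for every $j \in J$; since each component $X_j$ carries an all-comparable element by hypothesis, it is enough to prove the claim for a connected poset possessing an all-comparable element. Fixing such a component, Proposition~\ref{InnMultFrac} tells us that a multiplicative automorphism $M_\sigma$ is inner precisely when $\sigma$ is fractional, so the whole task collapses to showing that every multiplicative $\sigma \in \Ij$ has the form $\tau_h$ for some $h \colon X_j \to K^*$.

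To produce $h$, I would anchor the construction at the all-comparable element $y_0$. First note that multiplicativity forces $\sigma(y_0,y_0)=1$. For $x \in X_j$, comparability with $y_0$ splits into two cases that overlap only at $x=y_0$, and I would set $h(x)=\sigma(x,y_0)$ when $x \leq y_0$ and $h(x)=\sigma(y_0,x)^{-1}$ when $y_0 \leq x$; these agree at $y_0$, giving $h(y_0)=1$, so $h$ is well defined with values in $K^*$.

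The verification that $\sigma = \tau_h$ then amounts to checking $\sigma(x,y)=h(x)/h(y)$ for each pair $x \leq y$, and since $y_0$ is comparable to both endpoints this reduces to the three configurations $x \leq y \leq y_0$, $y_0 \leq x \leq y$, and $x \leq y_0 \leq y$. In each configuration the identity is a direct consequence of the multiplicative relation $\sigma(a,b)\sigma(b,c)=\sigma(a,c)$ applied along the chain through $y_0$; for instance, in the mixed case one uses $\sigma(x,y)=\sigma(x,y_0)\sigma(y_0,y)=h(x)\,h(y)^{-1}$.

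The argument is essentially bookkeeping once the reduction is in place, so I do not expect a serious obstacle; the only point demanding care is confirming that the case analysis for $h$ is exhaustive and internally consistent (exactly one strict inequality holds when $x \neq y_0$, and both trivial relations hold at $x=y_0$), and that the three endpoint configurations genuinely cover every comparable pair $x \leq y$.
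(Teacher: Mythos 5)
Your proof is correct and follows essentially the same route as the paper: the reduction to connected components via Corollary~\ref{coroiffMultInner} is exactly the paper's first step, and your explicit construction of the fractional element $\tau_h$ anchored at the all-comparable element (combined with Proposition~\ref{InnMultFrac}) is a self-contained proof of the component-level fact that the paper simply cites as \cite[Proposition~2.4]{BFS14}. Your case analysis is exhaustive and each configuration does follow from the multiplicative relation as you claim, so there is no gap.
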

\begin{proof}
It follows directly from Corollary~\ref{coroiffMultInner} and \cite[Proposition~2.4]{BFS14}.
\end{proof}

Let $X$ and $Y$ be posets and let $\varphi: X\to Y$ be an isomorphism (resp.~anti-isomorphism). Then $\varphi$ induces an isomorphism (resp.~anti-isomorphism) $\widehat{\varphi}$ (resp. $\rho_\varphi$) from $\FI$ to $\FIY$ given by
\begin{center}
$\widehat{\varphi}(f)(x,y)=f(\varphi^{-1}(x), \varphi^{-1}(y))$  (resp.~$\rho_\varphi(f)(x, y)=f(\varphi^{-1}(y), \varphi^{-1}(x))$),
\end{center}
for all $f \in \FI$ and $x, y \in Y$. In particular, when $X =Y$, $\widehat{\varphi}$ is an automorphism (resp.~anti-automorphism) of $\FI$. Moreover, if $\varphi$ is an involution, so is $\rho_\varphi$.

The theorem below follows from \cite[Lemma~3]{AutK} for automorphisms and it is \cite[Theorem~3.5]{BFS14} for anti-automorphisms and involutions.

\begin{theorem} \label{DecoAutFIP}
Let $X$ be a poset and let $K$ be a field. If $\Phi$ is an automorphism (anti-automorphism, involution) of $\FI$, then $\Phi = \Psi \circ M \circ \phi$, where $\Psi \in \Inn(\FI)$, $M\in \Mult(\FI)$ and $\phi$ is the automorphism (anti-automorphism, involution) induced by an automorphism (anti-automorphism, involution) of $X$.
\end{theorem}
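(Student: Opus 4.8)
The plan is to prove the automorphism case first, following the idempotent analysis of \cite[Lemma~3]{AutK}, and then to reduce the anti-automorphism and involution cases to it. The three building blocks I would use are: the \emph{point idempotents} $e_x\in\FI$ (the function supported on the single pair $(x,x)$), which encode $X$ inside $\FI$ via the relation $e_x\FI e_y\neq 0 \Leftrightarrow x\leq y$; the fact that an inner automorphism induces the identity on the quotient of $\FI$ by the ideal of functions vanishing on the diagonal (a commutative algebra), so that the ``point data'' attached to $\Phi$ is a genuine invariant; and the observation that an automorphism fixing every $e_x$ must be multiplicative. I will also repeatedly use that conjugating an $M_\sigma$ by $\widehat{\varphi}$ or by $\rho_\varphi$ again yields a multiplicative automorphism, which is exactly what lets me reorder factors into the required shape $\Psi\circ M\circ\phi$.

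\textbf{Automorphism case.} Let $\Phi\in\Aut(\FI)$. Since $\Phi$ carries primitive idempotents to primitive idempotents and preserves orthogonality, $\{\Phi(e_x)\}_{x\in X}$ is an orthogonal family of primitive idempotents, and by the invariance noted above there is a well-defined bijection $\varphi\colon X\to X$ with $\Phi(e_x)$ conjugate to $e_{\varphi(x)}$ for every $x$. First I would produce $a\in U(\FI)$ with $\Psi_a\bigl(\Phi(e_x)\bigr)=e_{\varphi(x)}$ for all $x$ simultaneously; this is the delicate step discussed below. Writing $\Psi'=\Psi_a$, the automorphism $\widehat{\varphi}^{-1}\circ\Psi'\circ\Phi$ then fixes every $e_x$ (because $\widehat{\varphi}(e_x)=e_{\varphi(x)}$), so it equals $M'$ for some $M'\in\Mult(\FI)$: indeed, it preserves each one-dimensional space $e_x\FI e_y$ and the resulting scalars $\sigma(x,y)$ satisfy $\sigma(x,x)=1$ and $\sigma(x,y)\sigma(y,z)=\sigma(x,z)$. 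Using $e_x\FI e_y\neq0\Leftrightarrow x\leq y$ together with the invertibility of $\Phi$ shows $x\leq y\Leftrightarrow\varphi(x)\leq\varphi(y)$, so $\varphi$ is a poset automorphism. Finally, from $\widehat{\varphi}^{-1}\circ\Psi'\circ\Phi=M'$ I get $\Phi=(\Psi')^{-1}\circ\widehat{\varphi}\circ M'=(\Psi')^{-1}\circ M\circ\widehat{\varphi}$, where $M=\widehat{\varphi}\circ M'\circ\widehat{\varphi}^{-1}\in\Mult(\FI)$, giving the claim with $\Psi=(\Psi')^{-1}$ and $\phi=\widehat{\varphi}$.

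\textbf{Anti-automorphism and involution cases.} For an anti-automorphism $\Phi$ the same idempotent analysis applies, except that $\Phi$ reverses products and hence $e_x\FI e_y\neq0\Leftrightarrow e_{\varphi(y)}\FI e_{\varphi(x)}\neq0$, so the induced bijection $\varphi$ is now a poset \emph{anti}-automorphism; in particular no self-duality of $X$ need be assumed in advance, it emerges here. Since $\rho_\varphi(e_x)=e_{\varphi(x)}$, I straighten $\Phi$ by an inner $\Psi'$ and then compose with $\rho_\varphi^{-1}$: the composite $\rho_\varphi^{-1}\circ\Psi'\circ\Phi$ is a product of two anti-automorphisms, hence an automorphism fixing all $e_x$, hence some $M'\in\Mult(\FI)$. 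Reordering as before (conjugation by $\rho_\varphi$ preserving $\Mult(\FI)$) yields $\Phi=\Psi\circ M\circ\rho_\varphi$ with $\phi=\rho_\varphi$. If moreover $\Phi$ is an involution, then because the point map $X\to X$ is canonically attached to $\Phi$ and is multiplicative under composition, $\Phi^2=\Id$ forces $\varphi^2=\Id$, so $\varphi$ is an involution of $X$ and $\rho_\varphi$ is an involution, as required.

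The step I expect to be the main obstacle is the simultaneous straightening of the orthogonal idempotent family, i.e.\ the construction of the conjugator $a\in U(\FI)$ with $\Psi_a(\Phi(e_x))=e_{\varphi(x)}$ for all $x$ at once. In the locally finite case this is a routine conjugacy-of-idempotents argument, but here $X$ is merely a poset and $\delta=\sum_x e_x$ is in general only a ``finitary'' sum, so one must verify that the element assembling the local conjugations actually lies in $\FI$ and is invertible there, which is precisely the finitary convergence condition. A secondary point requiring care, specific to the involution case, is the bookkeeping ensuring that the factorization can be chosen compatibly with $\Phi^2=\Id$; both of these are exactly what \cite[Lemma~3]{AutK} and \cite[Theorem~3.5]{BFS14} carry out, so I would either reproduce or invoke those arguments at this stage.
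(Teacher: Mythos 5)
Your outline is correct and follows essentially the same route as the paper, which gives no independent proof but simply invokes \cite[Lemma~3]{AutK} for automorphisms and \cite[Theorem~3.5]{BFS14} for anti-automorphisms and involutions; those references carry out exactly the idempotent analysis you describe (straightening the family $\Phi(e_x)$ by an inner automorphism, identifying the residual automorphism fixing all $e_x$ as multiplicative, and reordering the factors). You have also correctly isolated the genuinely delicate points (simultaneous conjugation of the idempotent family inside $\FI$ in the non--locally-finite setting, and extending $M_\sigma$ beyond the span of the $e_{xy}$), which is precisely what the cited lemmas handle.
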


\begin{remark}\label{remarkDecomposition}
The automorphism (anti-automorphism, involution) $\phi$ in the theorem above is equal to $\widehat{\varphi}$ (resp.~$\rho_\varphi$), where $\varphi$  is the automorphism (anti-automorphism, involution) of $X$ induced by $\Phi$. (See proofs of \cite[Lemma~3]{AutK} and \cite[Theorem~3.5]{BFS14}).
\end{remark}

Let $X=\bigcup_{j \in J}X_j$ be the decomposition of the poset $X$ into its connected components. If $X_j$ has an all-comparable element for each $j \in J$, then $\Mult(\FI) \subseteq \Inn(\FI)$, by Corollary~\ref{corolocalfinito}. In this case, the decomposition of $\Phi$ given by Theorem~\ref{DecoAutFIP} takes the simplest form $\Phi = \Psi \circ \phi$. However, the existence of an all-comparable element in $X_j$ for each $j \in J$ is not necessary to ensure that every multiplicative automorphism of $FI(X,K)$ is inner. For example, if $X_j$ is a finite tree for all $j\in J$, then $\Mult(\FI) \subseteq \Inn(\FI)$, by \cite[Corollary~6]{BMUL} and Corollary~\ref{coroiffMultInner}. Necessary and sufficient conditions for a multiplicative automorphism of $\FI$ to be inner were given in~\cite{BMUL} for the case when $X$ is finite and connected.

\section{Restrictions of automorphisms, anti-automorphisms and involutions} \label{SRestric}

From now on we consider a non-connected poset $X$ such that $\Mult(\FI) \subseteq \Inn(\FI)$ and we write $X=\bigcup_{j \in J}X_j$ its decomposition into connected components. In this case, if $\Phi$ is an automorphism (anti-automorphism, involution) of $\FI$ that induces the automorphism (anti-automorphism, involution) $\varphi$ of $X$, then there is $u\in U(\FI)$ such that $\Phi=\Psi_u \circ \widehat{\varphi}$ ($\Phi=\Psi_u \circ \rho_\varphi$), by Theorem~\ref{DecoAutFIP} and Remark~\ref{remarkDecomposition}.

\begin{remark}\label{rest_maps_posets}
Let $\varphi:X \to X$ be an automorphism (anti-automorphism, involution).
\begin{enumerate}
\item[(i)] For each $i\in J$ there exists $j \in J$ such that $\varphi(X_i)=X_j$.
\item[(ii)] If $L$ is a nonempty subset of $J$ such that $\varphi(X_L)=X_L$, then $\varphi|_{X_L}:X_L \to X_L$ is an automorphism (anti-automorphism, involution) which will be denoted by $\varphi_L$, and $(\varphi_{L})^{-1}=(\varphi^{-1})_{L}$. When $L=\{l\}$ we just write $\varphi_L=\varphi_l$.
\end{enumerate}
\end{remark}



\begin{definition}\label{def1}
Let $\alpha$ and $\lambda$ be an automorphism and an anti-automorphism of $X$, respectively, $L$ a nonempty subset of $J$ such that $\alpha(X_L)=X_L$ and $\lambda(X_L)=X_L$, and $u\in U(\FI)$. For $\Phi=\Psi_u\circ \widehat{\alpha}$ and $\rho=\Psi_u\circ \rho_{\lambda}$ we define
$$ \Phi_L:=\Psi_{u_L}\circ \widehat{\alpha_L} \ \text{ and } \ \rho_L:=\Psi_{u_L}\circ \rho_{\lambda_L}.$$
When $L=\{l\}$ we write $\Phi_L= \Phi_l$ and $\rho_L=\rho_l$.
\end{definition}

\begin{proposition}\label{prop2}
Let $\alpha$ and $\lambda$ be an automorphism and an anti-automorphism of $X$, respectively, and let $L$ be a nonempty subset of $J$ such that $\alpha(X_L)=X_L$ and $\lambda(X_L)=X_L$. Let $\Phi$ be an automorphism of $\FI$ that induces $\alpha$ on $X$ and $\rho$ an anti-automorphism of $\FI$ that induces $\lambda$ on $X$. Then

\begin{enumerate}
\item[(i)]$[\rho(f)]_L= \rho_L(f_L)$ for all $f\in \FI$;
\item[(ii)]$[\Phi(f)]_L= \Phi_L(f_L)$ for all $f\in \FI$;
\item[(iii)]$[\Phi \circ \rho]_L = \Phi_L \circ \rho_L$ and $[\rho\circ\Phi]_L = \rho_L \circ \Phi_L$.
\end{enumerate}
\end{proposition}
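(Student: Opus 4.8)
The plan is to prove (i) and (ii) first by a direct computation at the level of functions, and then derive (iii) formally from them. Parts (i) and (ii) are pointwise identities, while the two assertions in (iii) are identities of maps $\FIL \to \FIL$; once (i) and (ii) are known, (iii) is immediate, since for every $f \in \FI$ one has $[(\Phi\circ\rho)(f)]_L = [\Phi(\rho(f))]_L = \Phi_L([\rho(f)]_L) = \Phi_L(\rho_L(f_L))$ by applying (ii) to $\rho(f)$ and then (i), and symmetrically $[(\rho\circ\Phi)(f)]_L = \rho_L(\Phi_L(f_L))$ using (i) on $\Phi(f)$ and then (ii). So the real content lies in (i) and (ii), which are entirely parallel; I would carry out (i) in detail and indicate that (ii) follows by replacing $\rho_\lambda$ with $\widehat{\alpha}$ throughout.

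For (i), since $\Mult(\FI) \subseteq \Inn(\FI)$, Theorem~\ref{DecoAutFIP} together with Remark~\ref{remarkDecomposition} let me write $\rho = \Psi_v \circ \rho_\lambda$ for some $v \in U(\FI)$, so that $\rho_L = \Psi_{v_L} \circ \rho_{\lambda_L}$ by Definition~\ref{def1}. Applying Corollary~\ref{cor2}(i) to the inner automorphism $\Psi_v$ gives $[\rho(f)]_L = [\Psi_v(\rho_\lambda(f))]_L = \Psi_{v_L}\big([\rho_\lambda(f)]_L\big)$ for every $f \in \FI$. Hence everything reduces to the single identity $[\rho_\lambda(f)]_L = \rho_{\lambda_L}(f_L)$, after which $[\rho(f)]_L = \Psi_{v_L}(\rho_{\lambda_L}(f_L)) = \rho_L(f_L)$, as desired.

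The core step is therefore to verify $[\rho_\lambda(f)]_L = \rho_{\lambda_L}(f_L)$ directly from the definition of the induced anti-automorphisms. First I would record that $\lambda(X_L) = X_L$ forces $\lambda^{-1}(X_L) = X_L$, and that $(\lambda_L)^{-1} = (\lambda^{-1})_L$ by Remark~\ref{rest_maps_posets}(ii); in particular $(\lambda_L)^{-1}(z) = \lambda^{-1}(z)$ for every $z \in X_L$. Now fix $x, y \in X_L$. If $x \nleq y$, both sides vanish, since $\rho_\lambda(f) \in \FI$ and $\rho_{\lambda_L}(f_L) \in \FIL$. If $x \leq y$, then $\lambda^{-1}(y), \lambda^{-1}(x) \in X_L$, so $f_L(\lambda^{-1}(y), \lambda^{-1}(x)) = f(\lambda^{-1}(y), \lambda^{-1}(x))$, and hence
\[
\rho_{\lambda_L}(f_L)(x,y) = f_L\big((\lambda_L)^{-1}(y), (\lambda_L)^{-1}(x)\big) = f(\lambda^{-1}(y), \lambda^{-1}(x)) = \rho_\lambda(f)(x,y) = [\rho_\lambda(f)]_L(x,y).
\]
This yields the required identity.

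The only place demanding care is this last computation: it is where the hypothesis $\lambda(X_L) = X_L$ (ensuring $\lambda^{-1}$ preserves $X_L$, so that restricting $f$ to $X_L \times X_L$ before applying $\rho_{\lambda_L}$ agrees with applying $\rho_\lambda$ and then restricting) and the compatibility $(\lambda_L)^{-1} = (\lambda^{-1})_L$ are genuinely used. No deeper obstacle arises beyond this bookkeeping; the reductions to Corollary~\ref{cor2}(i) and Definition~\ref{def1} are formal, and part (ii) is obtained verbatim with $\widehat{\alpha}$, $\widehat{\alpha_L}$, $\alpha$, $\alpha_L$ in place of $\rho_\lambda$, $\rho_{\lambda_L}$, $\lambda$, $\lambda_L$, using $\widehat{\alpha}(f)(x,y) = f(\alpha^{-1}(x), \alpha^{-1}(y))$.
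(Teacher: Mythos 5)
Your argument is correct and follows essentially the same route as the paper: the same pointwise verification of $[\rho_\lambda(f)]_L=\rho_{\lambda_L}(f_L)$ using $(\lambda_L)^{-1}=(\lambda^{-1})_L$, followed by the same reduction via Corollary~\ref{cor2}(i) and Definition~\ref{def1}, with (ii) obtained by the parallel computation. The only point the paper makes explicit that you leave implicit in (iii) is that $[\Phi\circ\rho]_L$ is itself defined through Definition~\ref{def1} applied to the decomposition $\Phi\circ\rho=\Psi_{v\widehat{\alpha}(u)}\circ\rho_{\alpha\circ\lambda}$ (so one must note $(\alpha\circ\lambda)(X_L)=X_L$, apply (i) to this composite to get $[(\Phi\circ\rho)(f)]_L=[\Phi\circ\rho]_L(f_L)$, and use surjectivity of $f\mapsto f_L$); this is bookkeeping rather than a gap.
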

\begin{proof}
Suppose $\Phi= \Psi_v \circ \widehat{\alpha}$ and $\rho= \Psi_u \circ \rho_\lambda$, for some $v,u \in U(\FI)$.

(i) Let $f\in \FI$. Firstly, if $\lambda(X_L)=X_L$, then for all $x\leq y$ in $X_L$ we have
\begin{align*}
  [\rho_\lambda(f)]_L(x,y) & = \rho_\lambda(f)(x,y)=f(\lambda^{-1}(y),\lambda^{-1}(x))=f_L(\lambda^{-1}(y),\lambda^{-1}(x))\\
    & = f_L((\lambda^{-1})_L(y),(\lambda^{-1})_L(x))= f_L((\lambda_L)^{-1}(y),(\lambda_L)^{-1}(x))\\
    & = \rho_{\lambda_L}(f_L)(x,y).
\end{align*}
Thus, $[\rho_\lambda(f)]_L= \rho_{\lambda_L}(f_L)$. Therefore,
$$[\rho(f)]_L=[\Psi_u(\rho_{\lambda}(f))]_L=\Psi_{u_L}([\rho_{\lambda}(f)]_L)=\Psi_{u_L}(\rho_{\lambda_L}(f_L))=\rho_L(f_L),$$
 by Corollary~\ref{cor2} (i) and Definition~\ref{def1}.

(ii) It is similar to (i).

(iii) Note that $\Phi \circ \rho$ induces the anti-automorphism $\alpha\circ \lambda$ of $X$, because
$$\Phi \circ \rho= \Psi_v \circ \widehat{\alpha} \circ \Psi_u \circ \rho_\lambda = \Psi_{v} \circ \Psi_{\widehat{\alpha}(u)} \circ \widehat{\alpha} \circ \rho_\lambda = \Psi_{v\widehat{\alpha}(u)} \circ \rho_{\alpha \circ\lambda}.$$
Moreover, ($\alpha \circ \lambda) (X_L)=X_L$. Given $g \in \FIL$, consider $f \in \FI$ such that $f_L=g$. By (i) and (ii) we have
$$(\Phi_L \circ \rho_L)(g)= \Phi_L(\rho_L(f_L))= [\Phi(\rho(f))]_L =  [\Phi \circ \rho]_L(f_L)=[\Phi \circ \rho]_L(g).$$
Therefore, $[\Phi \circ \rho]_L = \Phi_L \circ \rho_L$. Analogously, $[\rho\circ\Phi]_L = \rho_L \circ \Phi_L$.
\end{proof}

\begin{proposition}\label{prop3}
Let $\lambda$ be an involution on $X$, $L$ a nonempty subset of $J$ such that $\lambda(X_L)=X_L$, and $u\in U(\FI)$. Let $\rho=\Psi_u\circ \rho_{\lambda}$. Then $\rho$ is an involution on $\FI$ if and only if $\rho_L=\Psi_{u_L}\circ \rho_{\lambda_L}$ is an involution on $\FIL$ and $\rho_{L^c}=\Psi_{u_{L^c}}\circ \rho_{\lambda_{L^c}}$ is an involution on $\FILC$.
\end{proposition}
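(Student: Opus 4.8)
The plan is to notice that the anti-automorphism half of the involution condition is automatic for all three maps, so that the statement collapses to a single identity comparing the squares $\rho^2$, $\rho_L^2$ and $\rho_{L^c}^2$, which is then obtained by invoking Proposition~\ref{prop2}(i) twice and gluing over the connected-component decomposition.

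First I would set up the bookkeeping. Since $\lambda$ is an involution of $X$ with $\lambda(X_L)=X_L$ and $\lambda$ is bijective, it also satisfies $\lambda(X_{L^c})=X_{L^c}$; hence $\lambda_L$ and $\lambda_{L^c}$ are involutions of $X_L$ and $X_{L^c}$ by Remark~\ref{rest_maps_posets}(ii). Because $u_L\in U(\FIL)$ and $u_{L^c}\in U(\FILC)$ by Corollary~\ref{cor1}(iv), the maps $\rho=\Psi_u\circ\rho_\lambda$, $\rho_L=\Psi_{u_L}\circ\rho_{\lambda_L}$ and $\rho_{L^c}=\Psi_{u_{L^c}}\circ\rho_{\lambda_{L^c}}$ are anti-automorphisms of $\FI$, $\FIL$ and $\FILC$ respectively (each being an inner automorphism composed with an induced anti-automorphism). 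Consequently each of the three is an involution exactly when its square is the identity, so the proposition reduces to showing that $\rho^2=\Id_{\FI}$ holds if and only if $\rho_L^2=\Id_{\FIL}$ and $\rho_{L^c}^2=\Id_{\FILC}$.

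The key step is the identity $[\rho^2(f)]_L=\rho_L^2(f_L)$ for all $f\in\FI$. Since $\rho$ is an anti-automorphism inducing $\lambda$ and $\lambda(X_L)=X_L$, Proposition~\ref{prop2}(i) gives $[\rho(g)]_L=\rho_L(g_L)$ for every $g\in\FI$. Applying this first to $g=\rho(f)$ and then to $g=f$ yields $[\rho^2(f)]_L=\rho_L\bigl([\rho(f)]_L\bigr)=\rho_L\bigl(\rho_L(f_L)\bigr)=\rho_L^2(f_L)$, and the verbatim argument with $L^c$ in place of $L$ gives $[\rho^2(f)]_{L^c}=\rho_{L^c}^2(f_{L^c})$. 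With these two identities both implications follow formally. For the forward direction, assuming $\rho^2=\Id_{\FI}$, given $g\in\FIL$ I would pick $f\in\FI$ with $f_L=g$ (extend $g$ by zero outside $X_L\times X_L$, which remains in $\FI$ because $X_L$ is a union of connected components), so that $\rho_L^2(g)=[\rho^2(f)]_L=f_L=g$; thus $\rho_L^2=\Id_{\FIL}$ and symmetrically $\rho_{L^c}^2=\Id_{\FILC}$. For the converse, if both restricted squares are the identity, then $[\rho^2(f)]_L=f_L$ and $[\rho^2(f)]_{L^c}=f_{L^c}$ for every $f$, and since $f$ is determined by the pair $(f_L,f_{L^c})$ through the isomorphism $\phi$, this forces $\rho^2(f)=f$, i.e.\ $\rho^2=\Id_{\FI}$.

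I do not expect a genuine obstacle here: the substantive content is entirely packaged in Proposition~\ref{prop2}(i), and the remaining work is formal. The only points deserving care are the well-definedness checks (that $\rho_L$ and $\rho_{L^c}$ really are anti-automorphisms of $\FIL$ and $\FILC$, which relies on the invertibility of $u_L,u_{L^c}$ and on $\lambda$ permuting $X_L$ and $X_{L^c}$) and making sure the hypotheses of Proposition~\ref{prop2}(i) are available for both $L$ and $L^c$; once these are in place, the double application of that proposition settles the claim.
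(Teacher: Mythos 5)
Your proof is correct, but it follows a different route from the paper's. The paper proves the proposition as a four-line chain of equivalences built on the criterion from \cite[Proposition~2.2]{ER2}: $\rho=\Psi_u\circ\rho_\lambda$ is an involution if and only if $\Psi_u=\Psi_{\rho_\lambda(u)}$; this condition on the unit is then split over $L$ and $L^c$ using Corollary~\ref{cor2}~(ii) and converted into the restricted condition via Proposition~\ref{prop2}~(i), after which the same external criterion is applied again in $\FIL$ and $\FILC$. You instead bypass that criterion entirely: you observe that all three maps are anti-automorphisms in any case, reduce ``involution'' to ``square equals the identity,'' derive $[\rho^2(f)]_L=\rho_L^2(f_L)$ by applying Proposition~\ref{prop2}~(i) twice, and glue via the component decomposition (using extension by zero for surjectivity of $f\mapsto f_L$). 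Both arguments are sound and of comparable length; yours is more self-contained, needing only the in-paper restriction lemma, while the paper's version is tighter and keeps the unit-based characterization $\Psi_u=\Psi_{\rho_\lambda(u)}$ in play, which is the same tool it reuses later (e.g.\ in Lemma~\ref{lemmacentralunit} and Theorem~\ref{teoeqtoRholambda}). Your well-definedness checks (invertibility of $u_L$, $u_{L^c}$, the fact that $\lambda$ preserves $X_{L^c}$, and that the zero-extension of $g\in\FIL$ lies in $\FI$ because intervals sit inside single connected components) are exactly the right points to verify and are all handled correctly.
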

\begin{proof}
We have
\begin{align*}
  \rho \text{ is an involution} & \Leftrightarrow \Psi_u=\Psi_{\rho_{\lambda}(u)} \text{ by \cite[Proposition~2.2]{ER2}}\\
    & \Leftrightarrow \Psi_{u_L}=\Psi_{[\rho_{\lambda}(u)]_L} \text{ and } \Psi_{u_{L^c}}=\Psi_{[\rho_{\lambda}(u)]_{L^c}} \text{ by Corollary~\ref{cor2} (ii)}\\
  & \Leftrightarrow \Psi_{u_L}=\Psi_{\rho_{\lambda_L}(u_L)} \text{ and } \Psi_{u_{L^c}}=\Psi_{\rho_{\lambda_{L^c}}(u_{L^c})}  \text{ by Proposition~\ref{prop2} (i)}\\
  & \Leftrightarrow \rho_L \text{ and } \rho_{L^c} \text{ are involutions by \cite[Proposition~2.2]{ER2}}.
\end{align*}
\end{proof}

\begin{proposition}\label{cor3}
Let $\lambda$ be an involution on $X$ such that $\lambda(X_j)=X_j$ for all $j\in J$, and $u\in U(\FI)$. Let $\rho=\Psi_u\circ \rho_{\lambda}$. Then $\rho$ is an involution on $\FI$ if and only if $\rho_j=\Psi_{u_j}\circ \rho_{\lambda_j}$ is an involution on $\FIj$ for all $j\in J$.
\end{proposition}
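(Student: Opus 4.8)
The plan is to mirror the proof of Proposition~\ref{prop3}, replacing the two-block decomposition $(L, L^c)$ by the full decomposition of $X$ into its connected components $\{X_j\}_{j\in J}$, and to use the componentwise version of Corollary~\ref{cor2}(ii), namely that $\Psi_u=\Psi_v$ if and only if $\Psi_{u_j}=\Psi_{v_j}$ for every $j\in J$. The engine of the argument is once again \cite[Proposition~2.2]{ER2}, which characterizes when an anti-automorphism of the form $\Psi_w\circ\rho_\mu$ (with $\rho_\mu$ an involution induced by an involution $\mu$ of the poset) is itself an involution, in terms of the identity $\Psi_w=\Psi_{\rho_\mu(w)}$.

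First I would record the standing hypotheses that make the statement meaningful. Since $\lambda(X_j)=X_j$ for every $j\in J$, Remark~\ref{rest_maps_posets}(ii) guarantees that each restriction $\lambda_j=\lambda|_{X_j}$ is a well-defined involution of $X_j$; consequently each $\rho_{\lambda_j}$ is an involution of $\FIj$, and by Definition~\ref{def1} each $\rho_j=\Psi_{u_j}\circ\rho_{\lambda_j}$ is an anti-automorphism of $\FIj$. Thus all the objects in the statement are defined, and it remains only to settle the equivalence.

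The core is then a chain of equivalences run over all of $J$ simultaneously. By \cite[Proposition~2.2]{ER2}, $\rho=\Psi_u\circ\rho_\lambda$ is an involution if and only if $\Psi_u=\Psi_{\rho_\lambda(u)}$. By Corollary~\ref{cor2}(ii) this holds if and only if $\Psi_{u_j}=\Psi_{[\rho_\lambda(u)]_j}$ for every $j\in J$. Taking $L=\{j\}$ in the internal computation of Proposition~\ref{prop2}(i) gives $[\rho_\lambda(u)]_j=\rho_{\lambda_j}(u_j)$, so the condition becomes $\Psi_{u_j}=\Psi_{\rho_{\lambda_j}(u_j)}$ for all $j\in J$. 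Applying \cite[Proposition~2.2]{ER2} once more, now inside each algebra $\FIj$, this is precisely the assertion that $\rho_j$ is an involution on $\FIj$ for every $j\in J$, which closes the equivalence.

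The main point requiring care is not a genuine obstacle but a matter of bookkeeping: one must ensure that the ``for all $j\in J$'' form of Corollary~\ref{cor2}(ii) is being invoked (which it is, since that corollary states the equivalence over the entire, possibly infinite, index set $J$), and that Proposition~\ref{prop2}(i) is applied with the singleton $L=\{j\}$, which is legitimate because $\lambda(X_j)=X_j$. One could alternatively try to iterate Proposition~\ref{prop3}, but that would only handle finite $J$ cleanly; running the equivalence chain directly over all of $J$ as above avoids any finiteness restriction and keeps the argument uniform.
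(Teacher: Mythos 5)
Your proof is correct and follows exactly the route the paper intends: the paper's proof of this proposition is simply ``analogous to the proof of Proposition~\ref{prop3},'' and your chain of equivalences via \cite[Proposition~2.2]{ER2}, the componentwise form of Corollary~\ref{cor2}(ii), and Proposition~\ref{prop2}(i) with $L=\{j\}$ is precisely that analogy spelled out. Your remark about running the equivalence over all of $J$ at once (rather than iterating the two-block case) is the right way to handle infinite $J$, and matches the statement of Corollary~\ref{cor2}(ii).
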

\begin{proof}
It is analogous to the proof of Proposition~\ref{prop3}.
\end{proof}

\section{Classification of involutions}\label{SClassification}

We recall that the involutions $\rho_1$ and $\rho_2$ on $\FI$ are equivalent if there exists $\phi\in\Aut(\FI)$ such that $\phi\circ\rho_1=\rho_2\circ\phi$. In this section we consider a field $K$ of characteristic different from $2$ and a non-connected poset $X$ such that $\Mult(\FI)\subseteq \Inn(\FI)$ and we give necessary and sufficient conditions for two involutions on $\FI$ to be equivalent. For this, we use some results and ideas from \cite{BFS11} and \cite{BFS14}.

\subsection{Classification of involutions via inner automorphisms}

We start by considering the classification via inner automorphisms. Let $\rho_1$ and $\rho_2$ be involutions on $\FI$. If there exists $\Psi\in\Inn(\FI)$ such that $\Psi\circ\rho_1=\rho_2\circ\Psi$, then $\rho_1$ and $\rho_2$ induce the same involution on $X$, by \cite[Theorem~5.1]{BFS14}. Thus, two involutions on $\FI$ that induce different involutions on $X$ are not equivalent via inner automorphisms. For that, in this subsection we fix an involution $\lambda$ on $X$ and give necessary and sufficient conditions for two involutions on $\FI$ that induce $\lambda$ on $X$ to be equivalent via inner automorphisms.

We denote by $\Inv_{\lambda}(\FI)$ the set of all involutions on $\FI$ that induce $\lambda$ on $X$.



\begin{theorem}\label{teoifffixPL}
Let $\rho ,\eta  \in \Inv_{\lambda}(\FI)$ and let $L$ be a nonempty subset of $J$ such that $\lambda(X_L)=X_L$. Consider the involutions $\rho_L$ and $\eta_L$ on $\FIL$, and $\rho_{L^c}$ and $\eta_{L^c}$ on $\FILC$. Then $\rho$ and $\eta$ are equivalent via inner automorphism, if and only if $\rho_L$ and $\eta_L$ are equivalent via inner automorphism, and $\rho_{L^c}$ and $\eta_{L^c}$ are equivalent via inner automorphism.
\end{theorem}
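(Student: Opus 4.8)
The plan is to prove both implications by using the bijective correspondence between involutions on $\FI$ and pairs of involutions on $\FIL$ and $\FILC$ established in Proposition~\ref{prop3}, together with the decomposition properties of inner automorphisms in Corollary~\ref{cor2} and the restriction formulas in Proposition~\ref{prop2}. The core observation is that conjugation by an inner automorphism $\Psi_w$ decomposes componentwise: $[\Psi_w(f)]_L = \Psi_{w_L}(f_L)$, and that the intertwining relation $\Psi_w \circ \rho = \eta \circ \Psi_w$ should be testable one block at a time.

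First I would prove the forward direction. Suppose $\rho$ and $\eta$ are equivalent via an inner automorphism, so there exists $w \in U(\FI)$ with $\Psi_w \circ \rho = \eta \circ \Psi_w$. Applying the restriction $[\,\cdot\,]_L$ to both sides and evaluating at an arbitrary $f \in \FI$, I would use Proposition~\ref{prop2}(iii) to obtain $[\Psi_w \circ \rho]_L = (\Psi_w)_L \circ \rho_L = \Psi_{w_L} \circ \rho_L$ and similarly $[\eta \circ \Psi_w]_L = \eta_L \circ \Psi_{w_L}$. Here I must note that $(\Psi_w)_L = \Psi_{w_L}$ by Corollary~\ref{cor2}(i) and that $w_L \in U(\FIL)$ by Corollary~\ref{cor1}(iv), so $\Psi_{w_L}$ is a genuine inner automorphism of $\FIL$. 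This yields $\Psi_{w_L} \circ \rho_L = \eta_L \circ \Psi_{w_L}$, i.e.\ $\rho_L$ and $\eta_L$ are equivalent via the inner automorphism $\Psi_{w_L}$. The identical argument applied with $L^c$ in place of $L$ gives the equivalence of $\rho_{L^c}$ and $\eta_{L^c}$.

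For the converse, suppose there exist $a \in U(\FIL)$ and $b \in U(\FILC)$ with $\Psi_a \circ \rho_L = \eta_L \circ \Psi_a$ and $\Psi_b \circ \rho_{L^c} = \eta_{L^c} \circ \Psi_b$. Using the algebra isomorphism $\FI \cong \FIL \times \FILC$ from the decomposition, I would assemble a single element $w \in U(\FI)$ characterized by $w_L = a$ and $w_{L^c} = b$; this is invertible by Corollary~\ref{cor1}(iv). I then claim $\Psi_w \circ \rho = \eta \circ \Psi_w$. To verify this, it suffices to check equality after restricting to $L$ and to $L^c$, since a function in $\FI$ is determined by its two restrictions. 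On the $L$-block, Proposition~\ref{prop2}(iii) and Corollary~\ref{cor2}(i) reduce the claim to $\Psi_a \circ \rho_L = \eta_L \circ \Psi_a$, which holds by hypothesis; the $L^c$-block is symmetric.

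\textbf{The main obstacle} I anticipate is the bookkeeping needed to justify that restriction commutes with composition in exactly the form required, and in particular that the hypotheses of Proposition~\ref{prop2}(iii) are met — namely that $\lambda(X_L) = X_L$ guarantees $\rho_L$ and $\eta_L$ are well-defined involutions on $\FIL$ (via Proposition~\ref{prop3}) and that the inner automorphism $\Psi_w$ restricts to $\Psi_{w_L}$ on the appropriate block. The delicate point is that Proposition~\ref{prop2} is stated for a fixed inner factorization $\Psi_v \circ \widehat{\alpha}$ or $\Psi_u \circ \rho_\lambda$, so in the converse direction I must be careful that the assembled $w$ induces the identity automorphism on $X$ (being inner) and that reconstructing the global intertwining relation from the two block relations genuinely uses only that $\FI$ embeds as the product of its two block algebras. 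Once the componentwise nature of both inner conjugation and involution restriction is in place, the equivalence follows by checking the intertwining identity separately on each block.
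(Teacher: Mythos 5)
Your proposal is correct and follows essentially the same route as the paper: restrict the intertwining relation $\Psi_w\circ\rho=\eta\circ\Psi_w$ blockwise using Proposition~\ref{prop2} and Corollary~\ref{cor2} for the forward direction, and for the converse assemble $w$ from the two block intertwiners (invertible by Corollary~\ref{cor1}(iv)) and verify the identity on each block. The paper carries out the converse by the explicit elementwise computation in~\eqref{eqteo41}, but this is the same argument you describe.
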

\begin{proof}
Note that if $\lambda(X_{L})=X_{L}$, then  $\lambda(X_{L^c})=X_{L^c}$. Therefore, if  $\Psi_u\circ \rho =\eta \circ \Psi_u$ for some $u \in U(\FI)$, then $\Psi_{u_L}\circ \rho_L =\eta_L \circ \Psi_{u_L}$ and $\Psi_{u_{L^c}}\circ \rho_{L^c} =\eta_{L^c} \circ \Psi_{u_{L^c}}$, by Proposition~\ref{prop2}~(iii).

Conversely,  let $w_1 \in U(\FIL)$ and $w_2 \in U(\FILC)$ such that $\Psi_{w_1}\circ \rho_L =\eta_L \circ \Psi_{w_1}$ and $\Psi_{w_2}\circ \rho_{L^c} =\eta_{L^c} \circ \Psi_{w_2}$. Let $w \in \FI$ such that $w_L=w_1$ and $w_{L^c}=w_2$. Then $w\in U(\FI)$. By Corollary~\ref{cor2}~(i) and Proposition~\ref{prop2}~(i), we have
\begin{align}\label{eqteo41}
[(\Psi_w \circ \rho)(f)]_L & = \Psi_{w_L}([\rho(f)]_L)= \Psi_{w_1}(\rho_L(f_L)) =\eta_L (\Psi_{w_1}(f_L))=\eta_L (\Psi_{w_L}(f_L))\nonumber\\
                           & =\eta_L( [\Psi_{w}(f)]_L)= [(\eta\circ \Psi_w)(f)]_L
\end{align}
for all $f \in \FI$. Analogously, $[(\Psi_w \circ \rho)(f)]_{L^c}=[(\eta \circ \Psi_{w})(f)]_{L^c}$ for all $f \in \FI$. Therefore, $(\Psi_w \circ \rho)(f) = (\eta \circ \Psi_w)(f)$ for all $f \in \FI$, that is, $\Psi_w \circ \rho = \eta \circ \Psi_w.$
\end{proof}

Consider the following relation defined on the set $J$ (such that $X=\bigcup_{j \in J}X_j$):
$$i\preceq j\Leftrightarrow X_i=X_j.$$
Then $(J,\preceq)$ is a poset (antichain). Note that the involution $\lambda:X \to X$ induces an involution $\overline{\lambda}: J \to J$ such that, given $j\in J$, one has $\lambda(X_j)= X_{\overline{\lambda}(j)}$, by Remark~\ref{rest_maps_posets} (i). Let $(J_1,J_2,J_3)$ be a $\overline{\lambda}$-decomposition of $J$. Then
\begin{equation} \label{eqj3}
     J_3= \{j \in J : \overline{\lambda}(j)=j\}=\{j \in J : \lambda(X_j)=X_j\}.
\end{equation}

\begin{lemma}\label{lemmacentralunit}
If $\rho=\Psi_u \circ \rho_\lambda$ is an involution on $\FI$, then there is a central unit $v$ of $\FI$ such that $\rho_\lambda(u)=vu$. Moreover, for each $j \in J$, there is $k_j\in K^*$ such that $v(x,x)=k_j$ for all $x\in X_j$, and $k_jk_{\overline{\lambda}(j)}=1$.
\end{lemma}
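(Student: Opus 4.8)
The plan is to extract $v$ directly from the involution condition on $\rho$, read off its diagonal structure from the description of the center, and then obtain the relation $k_jk_{\overline{\lambda}(j)}=1$ by applying $\rho_\lambda$ once more. First I would recall that, as already used in the proof of Proposition~\ref{prop3}, $\rho=\Psi_u\circ\rho_\lambda$ is an involution if and only if $\Psi_u=\Psi_{\rho_\lambda(u)}$ (by \cite[Proposition~2.2]{ER2}; alternatively this drops out of $\rho^2=\Psi_{u\,\rho_\lambda(u)^{-1}}$, using $\rho_\lambda^2=\Id$ and $\rho_\lambda\circ\Psi_u=\Psi_{\rho_\lambda(u)^{-1}}\circ\rho_\lambda$). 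Setting $v:=\rho_\lambda(u)u^{-1}$, the equality $\Psi_u=\Psi_{\rho_\lambda(u)}$ forces $u^{-1}\rho_\lambda(u)\in Z(\FI)$; by centrality $v=u\bigl(u^{-1}\rho_\lambda(u)\bigr)u^{-1}=u^{-1}\rho_\lambda(u)$, so $v$ is itself central, and it is a unit as a product of units. This yields $\rho_\lambda(u)=vu$, proving the first assertion.

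Next I would invoke Proposition~\ref{propcenFI}: a central element of $\FI$ is a diagonal function that is constant on each connected component. Hence for every $j\in J$ there is $k_j\in K$ with $v(x,x)=k_j$ for all $x\in X_j$. Since $v$ is invertible, $v(x,x)\neq 0$ for every $x\in X$ by \cite[Theorem~2]{KN}, so $k_j\in K^*$.

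To establish $k_jk_{\overline{\lambda}(j)}=1$, I would apply $\rho_\lambda$ to the equation $\rho_\lambda(u)=vu$. Using $\rho_\lambda^2=\Id$ and that $\rho_\lambda$ reverses products, the left side becomes $u$ and the right side becomes $\rho_\lambda(u)\rho_\lambda(v)=vu\,\rho_\lambda(v)$; because $v$ and $\rho_\lambda(v)$ are central this rearranges to $u=u\,v\rho_\lambda(v)$, whence $v\rho_\lambda(v)=\delta$. A short computation shows that $\rho_\lambda(v)$ is again diagonal with $\rho_\lambda(v)(x,x)=v(\lambda^{-1}(x),\lambda^{-1}(x))$. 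Since $\lambda$ is an involution, $\lambda^{-1}(x)=\lambda(x)\in X_{\overline{\lambda}(j)}$ for $x\in X_j$, so $\rho_\lambda(v)(x,x)=k_{\overline{\lambda}(j)}$; in particular $\rho_\lambda(v)$ is constant on each connected component and thus central, as was used above. Evaluating $v\rho_\lambda(v)=\delta$ on the diagonal at any $x\in X_j$ then gives $k_jk_{\overline{\lambda}(j)}=1$.

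The only point requiring care — and the main (though modest) obstacle — is the identification of $\rho_\lambda(v)$: one must track how $\lambda$ permutes the connected components through the induced involution $\overline{\lambda}$ and verify that $\rho_\lambda(v)$ remains diagonal and central, so that the relation $v\rho_\lambda(v)=\delta$ can be read off componentwise.
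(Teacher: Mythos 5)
Your proposal is correct and follows essentially the same route as the paper: the paper likewise obtains the central unit $v$ with $\rho_\lambda(u)=vu$ from \cite[Proposition~2.2]{ER2} (you merely unwind that citation by deriving $u^{-1}\rho_\lambda(u)\in Z(\FI)$ from $\Psi_u=\Psi_{\rho_\lambda(u)}$), then applies $\rho_\lambda$ to get $v\rho_\lambda(v)=\delta$, reads off the constants $k_j$ from Proposition~\ref{propcenFI}, and evaluates on the diagonal using $\lambda(X_j)=X_{\overline{\lambda}(j)}$ to conclude $k_jk_{\overline{\lambda}(j)}=1$. No gaps.
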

\begin{proof}
By (i) and (ii) of \cite[Proposition~2.2]{ER2}, there is a central unit $v \in \FI$ such that $\rho_\lambda(u)=vu$. So
$$u= \rho_{\lambda}^2(u)= \rho_{\lambda}(vu)= \rho_{\lambda}(u)\rho_{\lambda}(v)= vu\rho_{\lambda}(v)=uv\rho_{\lambda}(v)$$
and, consequently, $v\rho_{\lambda}(v)=\delta$. Since $v\in Z(\FI)$, then $v$ is diagonal and for each $j\in J$ there is $k_j\in K^*$ such that $v(x,x)=k_j$ for all $x\in X_j$, by Proposition~\ref{propcenFI}. Thus, if $x \in X_j$, then $\lambda(x) \in \lambda(X_j)=X_{\overline{\lambda}(j)}$ and so
$$1=\delta(x,x)=(v\rho_\lambda(v))(x,x)=v(x,x)v(\lambda(x),\lambda(x))=k_jk_{\overline{\lambda}(j)}.$$
\end{proof}

In order to classify the involutions on $\FI$ that induce $\lambda$ on $X$, via inner automorphisms, we consider the cases $J_3 =\emptyset$ and $J_3\neq \emptyset$. We start by considering $J_3 =\emptyset$ or, equivalently, $\lambda(X_j) \neq X_j$ for all $j \in J$.

\begin{theorem}\label{teoeqtoRholambda}
Let $\lambda$ be an involution on $X$ such that $J_3 = \emptyset$. Every $\rho \in \Inv_{\lambda}(\FI)$ is equivalent to $\rho_\lambda$ via inner automorphism.
\end{theorem}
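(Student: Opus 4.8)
The plan is to write $\rho = \Psi_u \circ \rho_\lambda$ for some $u \in U(\FI)$ (possible by Theorem~\ref{DecoAutFIP}, Remark~\ref{remarkDecomposition} and the standing hypothesis $\Mult(\FI) \subseteq \Inn(\FI)$) and to exhibit a single unit $w \in U(\FI)$ satisfying $\Psi_w \circ \rho = \rho_\lambda \circ \Psi_w$. The first step is to turn this operator identity into an algebraic condition on $w$. Since $\rho_\lambda$ is an involutive anti-automorphism, a short computation shows $\rho_\lambda \circ \Psi_w = \Psi_{\rho_\lambda(w)^{-1}} \circ \rho_\lambda$, so that the two sides become
$$\Psi_w \circ \rho = \Psi_{wu} \circ \rho_\lambda \quad \text{and} \quad \rho_\lambda \circ \Psi_w = \Psi_{\rho_\lambda(w)^{-1}} \circ \rho_\lambda.$$
Cancelling $\rho_\lambda$ on the right and using \cite[Proposition~2.2]{ER2} (two inner automorphisms $\Psi_a,\Psi_b$ agree iff $b^{-1}a \in Z(\FI)$), the whole problem reduces to finding $w \in U(\FI)$ with $\rho_\lambda(w)\,w\,u \in Z(\FI)$.

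Next I would use the hypothesis $J_3 = \emptyset$. Fixing a $\overline{\lambda}$-decomposition $(J_1,J_2,J_3)$ of $J$ with $J_3 = \emptyset$, we have $J = J_1 \cup J_2$ with $\overline{\lambda}(J_1) = J_2$, so the connected components come in disjoint pairs $\{X_j, X_{\overline{\lambda}(j)}\}$ interchanged by $\lambda$. Working in the identification $\FI \cong \prod_{j \in J}\FIj$, I would set
$$ w_j = (u_j)^{-1} \text{ for } j \in J_1, \qquad w_j = \delta \text{ for } j \in J_2, $$
which is a unit by Corollary~\ref{cor1}~(iv). The reason for this asymmetric choice is that, straight from the definition of $\rho_\lambda$, the component $[\rho_\lambda(w)]_j$ depends only on $w_{\overline{\lambda}(j)}$ through the anti-isomorphism $\theta_j$ induced by $\lambda$. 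Hence on the $J_1$-components one gets $[\rho_\lambda(w)\,w\,u]_j = \theta_j(\delta)\,(u_j)^{-1} u_j = \delta$ at once, while on the $J_2$-components the expression collapses to $\theta_j(u_{\overline{\lambda}(j)})^{-1} u_j$.

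Finally I would evaluate this last quantity using Lemma~\ref{lemmacentralunit}. Writing $\rho_\lambda(u) = vu$ with $v$ central, $v(x,x)=k_j$ on $X_j$ and $k_j k_{\overline{\lambda}(j)} = 1$, the identity $\theta_j(u_{\overline{\lambda}(j)}) = [\rho_\lambda(u)]_j = k_j u_j$ on each $j \in J_2$ reduces $[\rho_\lambda(w)\,w\,u]_j$ to the scalar function $k_j^{-1}\delta = k_{\overline{\lambda}(j)}\delta$. Thus $\rho_\lambda(w)\,w\,u$ is diagonal and constant on each connected component, hence central by Proposition~\ref{propcenFI}, which by the reduction above yields $\Psi_w\circ\rho = \rho_\lambda\circ\Psi_w$ and finishes the argument. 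I expect the only delicate point to be the bookkeeping across a single pair $\{j,\overline{\lambda}(j)\}$: the asymmetric definition of $w$ is exactly what makes the two halves of the pair fit together, and it is the compatibility relation $k_j k_{\overline{\lambda}(j)} = 1$ supplied by Lemma~\ref{lemmacentralunit} that guarantees the resulting diagonal function is genuinely central. One could alternatively reduce to a single pair $L=\{j,\overline{\lambda}(j)\}$ via Theorem~\ref{teoifffixPL}, but the global construction above handles all pairs simultaneously and so avoids any complication when $J$ is infinite.
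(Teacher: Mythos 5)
Your argument is correct, and it takes a genuinely different route from the paper's. Both proofs start the same way: write $\rho=\Psi_u\circ\rho_\lambda$ and invoke Lemma~\ref{lemmacentralunit} to obtain the central unit $v$ with $\rho_\lambda(u)=vu$ and $k_jk_{\overline{\lambda}(j)}=1$. They then diverge. The paper uses the scalars $k_j$ to build a \emph{central} diagonal unit $w$ satisfying $\rho_\lambda(w)v=w$, replaces $u$ by the $\rho_\lambda$-symmetric unit $u_1=wu$ (which defines the same inner automorphism), and then appeals to the Hilbert-90-type factorization of \cite[Lemma~5.3]{BFS14} --- valid whenever $\lambda$ has no fixed points, which $J_3=\emptyset$ guarantees --- to write $u_1=v_1\rho_\lambda(v_1)$ and conclude via \cite[Proposition~2.2]{ER2}. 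You instead reduce the whole statement to the single condition $\rho_\lambda(w)wu\in Z(\FI)$ and solve it by an explicit componentwise construction, exploiting the fact that $J_3=\emptyset$ means $\lambda$ interchanges whole connected components in pairs: taking $w_j=u_j^{-1}$ on the $J_1$-components and $w_j=\delta$ on the $J_2$-components makes $[\rho_\lambda(w)wu]_j$ equal to $\delta_j$ for $j\in J_1$ and to $k_{\overline{\lambda}(j)}\delta_j$ for $j\in J_2$, hence diagonal and constant on each component, hence central by Proposition~\ref{propcenFI}. I checked the key identities ($\rho_\lambda\circ\Psi_w=\Psi_{\rho_\lambda(w)^{-1}}\circ\rho_\lambda$, the fact that $[\rho_\lambda(w)]_j$ depends only on $w_{\overline{\lambda}(j)}$, and $[\rho_\lambda(u)]_j=k_ju_j$) and they are all correct. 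What your version buys is self-containedness: you never need \cite[Lemma~5.3]{BFS14}, because the hypothesis actually available here --- components swapped in pairs --- is strictly stronger than mere fixed-point-freeness of $\lambda$ and lets the two halves of each pair be handled as independent factors of $\prod_{j\in J}\FIj$. What the paper's version buys is brevity given the imported lemma, together with the standard normalization $\rho_\lambda(u_1)=u_1$ that recurs throughout this line of work.
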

\begin{proof}
Let $u\in U(\FI)$ such that $\rho=\Psi_u \circ \rho_\lambda$. By Lemma~\ref{lemmacentralunit}, there is a central unit $v$ of $\FI$ such that $\rho_\lambda(u)=vu$ and $k_jk_{\overline{\lambda}(j)}=1$, where $v(x,x)=k_j$ for all $x\in X_j$, for each $j \in J$. Define $w \in \FI$ by
\begin{align}\label{eqh}
w(x,y)=\begin{cases}
k_j & \text{if } x=y \in X_j \text{ and } j \in J_1 \\
1 & \text{if } x=y \in X_j \text{ and } j \in J_2 \\
0 & \text{if } x\neq y
\end{cases}.
\end{align}
Then $w$ is a central unit of $\FI$ and
\begin{align}\label{eqrhoH}
    \rho_\lambda(w)(x,y) = w(\lambda(y),\lambda(x)) & = \begin{cases}
k_j & \text{if } \lambda(x)=\lambda(y) \in X_j \text{ and } j \in J_1 \\
1 & \text{if } \lambda(x)=\lambda(y) \in X_j \text{ and } j \in J_2 \\
0 & \text{if } x\neq y
\end{cases} \nonumber \\
& = \begin{cases}
k_j & \text{if } x=y \in X_{\overline{\lambda}(j)} \text{ and } j \in J_1 \\
1 & \text{if } x=y \in X_{\overline{\lambda}(j)}\text{ and } j \in J_2 \\
0 & \text{if } x\neq y
\end{cases}.
\end{align}
Since $\rho_\lambda(w)$ and $v$ are diagonal, so is $\rho_\lambda(w)v$, and by \eqref{eqh} and \eqref{eqrhoH}, we have
\begin{align*}
    (\rho_\lambda(w)v)(x,x) & = \rho_\lambda(w)(x,x)v(x,x)\\
     & = \begin{cases}
k_j k_{\overline{\lambda}(j)} & \text{if } x\in X_{\overline{\lambda}(j)} \text{ and } j \in J_1 \\
k_{\overline{\lambda}(j)} & \text{if } x \in X_{\overline{\lambda}(j)}\text{ and } j \in J_2
\end{cases}\\
 & = \begin{cases}
1 & \text{if } x\in X_{\overline{\lambda}(j)} \text{ and } j \in J_1 \\
k_{\overline{\lambda}(j)} & \text{if } x\in X_{\overline{\lambda}(j)}\text{ and } j \in J_2
\end{cases}\\
 & =\begin{cases}
1 & \text{if } x \in X_j \text{ and } j \in J_2 \\
k_j & \text{if } x\in X_j \text{ and } j \in J_1
\end{cases} \\
& = w(x,x)
\end{align*}
for all $x\in X$. Therefore, $\rho_\lambda(w)v=w$.

Let $u_1=wu \in U(\FI)$. Then $\rho_\lambda(u_1)=\rho_{\lambda}(w)\rho_{\lambda}(u)=\rho_{\lambda}(w)vu=wu=u_1$. Moreover, $\Psi_u = \Psi_{u_1}$, by \cite[Proposition~2.2]{ER2}. Thus, $\rho=\Psi_{u_1}\circ \rho_{\lambda}$ with $\rho_\lambda(u_1)=u_1$. Since $\lambda(X_j)\neq X_j$ for all $j\in J$, then $\lambda(x)\neq x$ for all $x\in X$, by Remark~\ref{rest_maps_posets}~(i). So, by \cite[Lemma~5.3]{BFS14}, there exists $v_1 \in U(\FI)$ such that $u_1=v_1\rho_\lambda(v_1)$. Therefore, $\rho=\Psi_{v_1\rho_{\lambda}(v_1)}\circ \rho_\lambda$, thus $\rho$ and $\rho_\lambda$ are equivalent via inner automorphism, by \cite[Proposition~2.2]{ER2}.
\end{proof}

Now, we suppose $J_3\neq \emptyset$. By \eqref{eqj3},
\begin{align}\label{eqfixPJ3}
\lambda(X_{J_3})=\lambda\left(\displaystyle\bigcup_{j \in J_3}X_j\right)=\displaystyle\bigcup_{j \in J_3}\lambda(X_j)=\displaystyle\bigcup_{j \in J_3}X_j=X_{J_3}.
\end{align}

\begin{theorem}\label{teoiffinnerJ3}
Let $\lambda$ be an involution on $X$ such that $J_3\neq\emptyset$. Let $\rho ,\eta  \in \Inv_{\lambda}(\FI)$. The following statements are equivalent:
\begin{enumerate}
    \item[(i)] $\rho$ and $\eta$ are equivalent via inner automorphism.
    \item[(ii)] $\rho_{J_3}$ and $\eta_{J_3}$ are equivalent via inner automorphism.
    \item[(iii)] $\rho_{j}$ and $\eta_{j}$ are equivalent via inner automorphism, for all $j \in J_3$.
\end{enumerate}

\end{theorem}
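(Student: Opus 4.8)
The plan is to reduce the whole statement to the two component-separation results already available: Theorem~\ref{teoifffixPL}, which splits inner-equivalence across a $\lambda$-invariant subset $L\subseteq J$ and its complement, and Theorem~\ref{teoeqtoRholambda}, which says that over a poset whose induced $\overline\lambda$ fixes no component every involution inducing $\lambda$ is inner-equivalent to the canonical $\rho_\lambda$. Throughout I write $\sim$ for equivalence via inner automorphism. The structural facts I lean on are $\lambda(X_{J_3})=X_{J_3}$ (already recorded in~\eqref{eqfixPJ3}), the fact that $\overline\lambda$ interchanges $J_1$ and $J_2$ and hence fixes no component of $X_{(J_3)^c}$, and the fact that each $X_j$ with $j\in J_3$ is $\lambda$-invariant. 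By Proposition~\ref{prop3} the restrictions $\rho_{J_3},\eta_{J_3}$ and $\rho_{(J_3)^c},\eta_{(J_3)^c}$ are indeed involutions, so all the objects appearing below make sense.

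For (i)$\Leftrightarrow$(ii) I take $L=J_3$ in Theorem~\ref{teoifffixPL}. If $J=J_3$ there is nothing to do, since then $X_{J_3}=X$, $\rho_{J_3}=\rho$ and $\eta_{J_3}=\eta$. Otherwise $(J_3)^c=J_1\cup J_2\neq\emptyset$, and because $\overline\lambda$ maps $J_1$ bijectively onto $J_2$ this set has at least two elements, so $X_{(J_3)^c}$ is a non-connected poset; moreover $\Mult(\FIJFixC)\subseteq\Inn(\FIJFixC)$ by Corollary~\ref{coroiffMultInner}. Thus $X_{(J_3)^c}$ with the restricted involution $\lambda_{(J_3)^c}$ meets the standing hypotheses, and the fixed-component set of $\lambda_{(J_3)^c}$ is empty. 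By Theorem~\ref{teoeqtoRholambda} applied to this poset, both $\rho_{(J_3)^c}$ and $\eta_{(J_3)^c}$ are equivalent via inner automorphism to $\rho_{\lambda_{(J_3)^c}}$, hence $\rho_{(J_3)^c}\sim\eta_{(J_3)^c}$. Consequently the complementary condition in Theorem~\ref{teoifffixPL} holds automatically, and that theorem reduces $\rho\sim\eta$ to $\rho_{J_3}\sim\eta_{J_3}$, which is exactly (i)$\Leftrightarrow$(ii).

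For (ii)$\Leftrightarrow$(iii) I work inside $\FIJFix$, whose connected components are precisely the $X_j$ with $j\in J_3$, each $\lambda_{J_3}$-invariant; iterating Definition~\ref{def1} gives $(\rho_{J_3})_j=\rho_j$ and $(\eta_{J_3})_j=\eta_j$. For (ii)$\Rightarrow$(iii), given $w\in U(\FIJFix)$ with $\Psi_w\circ\rho_{J_3}=\eta_{J_3}\circ\Psi_w$, I restrict to each component $j\in J_3$ via Proposition~\ref{prop2}(iii) (with the inner automorphism $\Psi_w$, which induces the identity on $X_{J_3}$), obtaining $\Psi_{w_j}\circ\rho_j=\eta_j\circ\Psi_{w_j}$, so $\rho_j\sim\eta_j$. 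For the converse, for each $j\in J_3$ pick $w_j\in U(\FIj)$ with $\Psi_{w_j}\circ\rho_j=\eta_j\circ\Psi_{w_j}$ and assemble the single element $w=(w_j)_{j\in J_3}\in\FIJFix$, a unit because each $w_j$ is invertible. Then for every $f\in\FIJFix$ and every $j\in J_3$, a component-wise computation using Corollary~\ref{cor2}(i) and Proposition~\ref{prop2}(i) gives $[(\Psi_w\circ\rho_{J_3})(f)]_j=\Psi_{w_j}(\rho_j(f_j))=\eta_j(\Psi_{w_j}(f_j))=[(\eta_{J_3}\circ\Psi_w)(f)]_j$, so the two maps agree on every component and hence coincide, yielding $\rho_{J_3}\sim\eta_{J_3}$.

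The main obstacle is conceptual rather than computational and lies in (i)$\Leftrightarrow$(ii): one must recognise that the complement $X_{(J_3)^c}$ carries an involution with no fixed component, so that Theorem~\ref{teoeqtoRholambda} makes the complementary factor of the equivalence automatically free, and one must verify that $X_{(J_3)^c}$ still satisfies the running hypotheses (non-connectedness and $\Mult\subseteq\Inn$) required to invoke that theorem. The assembly in (iii)$\Rightarrow$(ii) is the other delicate point, since $J_3$ may be infinite and cannot be treated by induction on components; gluing the units $w_j$ into one $w$ through the identification $\FIJFix\cong\prod_{j\in J_3}\FIj$ and then verifying the intertwining component-wise is what makes the argument valid uniformly.
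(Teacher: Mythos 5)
Your proposal is correct and uses essentially the same ingredients and structure as the paper's proof: Theorem~\ref{teoifffixPL} with $L=J_3$, the observation that $\lambda_{(J_3)^c}$ fixes no component so Theorem~\ref{teoeqtoRholambda} makes the complementary equivalence automatic, and the component-wise assembly of the units $w_j$ exactly as in \eqref{eqteo41}. The only difference is organizational (you prove two biconditionals where the paper runs the cycle (i)$\Rightarrow$(iii)$\Rightarrow$(ii)$\Rightarrow$(i)), which does not change the substance.
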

\begin{proof}

(i) $\Rightarrow$ (iii)
By \eqref{eqj3}, $\lambda(X_j)=X_j$ for all $j\in J_3$. Thus, if $\rho$ and $\eta$ are equivalent via inner automorphism, then $\rho_{j}$ and $\eta_{j}$ are equivalent via inner automorphism for all $j \in J_3$, by Theorem~\ref{teoifffixPL}.

(iii) $\Rightarrow$ (ii)  Let $w_j \in \FIj$ such that  $\Psi_{w_j}\circ \rho_{j}=\eta_{j} \circ \Psi_{w_j}$ for all $j \in J_3$. Let $w\in U(\FIJFix)$ such that $w=(w_j)_{j\in J_3}\in \prod_{j\in J_3}\FIj$. As in \eqref{eqteo41}, we can show that $\Psi_{w}\circ \rho_{J_3}=\eta_{J_3} \circ \Psi_{w}$ and, therefore, $\rho_{J_3}$ and $\eta_{J_3}$ are equivalent via inner automorphism.

(ii) $\Rightarrow$ (i) Suppose $\rho_{J_3}$ and $\eta_{J_3}$ are equivalent via inner automorphism. Since $\lambda(X_{J_3})=X_{J_3}$, then $\lambda(X_{(J_3)^c})=X_{(J_3)^c}$. Thus $\lambda(X_j) \neq X_j$ for all $j \in (J_3)^c$. By Theorem~\ref{teoeqtoRholambda}, the involutions $\rho_{(J_3)^c}$ and $\eta_{(J_3)^c}$ on $\FIJFixC$ are equivalent via inner automorphism. Therefore, $\rho$ and $\eta$ are equivalent via inner automorphism, by Theorem~\ref{teoifffixPL}.

\end{proof}

 Denote by $\thickapprox$ the equivalence of elements of $\Inv_{\lambda}(\FI)$ via inner automorphisms, that is, for $\rho, \eta \in \Inv_{\lambda}(\FI)$,
\begin{equation*}\label{eqeqrelation}
\rho \thickapprox \eta \Leftrightarrow \Psi \circ \rho =\eta \circ \Psi \text{ for some } \Psi \in \Inn(\FI).
\end{equation*}
As usual, we denote the quotient set by $\dfrac{\Inv_\lambda(\FI)}{\thickapprox}$ and the equivalence class of $\rho\in \Inv_\lambda(\FI)$ by $\overline{\rho}$.

\begin{lemma}\label{bij_classes}
The following map $\Omega$ is a bijection:
\begin{equation*}\label{eqbijection}
\begin{array}{cccc}
\Omega: & \dfrac{\Inv_\lambda(\FI)}{\thickapprox} & \rightarrow  & \displaystyle \prod_{j \in J_3}\frac{\Inv_{\lambda_j}(\FIj)}{\thickapprox}\\
  & \overline{\rho} & \mapsto & (\overline{\rho_j})_{j \in J_3}
\end{array}.
\end{equation*}
\end{lemma}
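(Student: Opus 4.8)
The plan is to read off well-definedness and injectivity of $\Omega$ at once from the biconditional (i)~$\Leftrightarrow$~(iii) of Theorem~\ref{teoiffinnerJ3}, and then to treat surjectivity separately by an explicit construction of a preimage. First I would check that the components of $\Omega$ make sense: given $\rho \in \Inv_\lambda(\FI)$ and $j \in J_3$, equation~\eqref{eqj3} gives $\lambda(X_j)=X_j$, so $\lambda_j$ is an involution on $X_j$; writing $\rho=\Psi_u\circ\rho_\lambda$, Definition~\ref{def1} gives $\rho_j=\Psi_{u_j}\circ\rho_{\lambda_j}$, which is an involution on $\FIj$ inducing $\lambda_j$ by Proposition~\ref{cor3}. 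Hence $\rho_j\in\Inv_{\lambda_j}(\FIj)$ and $\overline{\rho_j}$ is a legitimate target component.

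For well-definedness and injectivity, I would note that for $\rho,\eta\in\Inv_\lambda(\FI)$ Theorem~\ref{teoiffinnerJ3} supplies the chain
\begin{align*}
\overline{\rho}=\overline{\eta} &\iff \rho\thickapprox\eta \iff \rho_j\thickapprox\eta_j \text{ for all } j\in J_3\\
&\iff (\overline{\rho_j})_{j\in J_3}=(\overline{\eta_j})_{j\in J_3} \iff \Omega(\overline{\rho})=\Omega(\overline{\eta}).
\end{align*}
The forward implication shows $\Omega$ is independent of the chosen representative, and the full equivalence shows it is injective. For surjectivity, I would take an arbitrary element $(\overline{\eta^{(j)}})_{j\in J_3}$ of the product, with $\eta^{(j)}\in\Inv_{\lambda_j}(\FIj)$. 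Since $\Mult(\FIj)\subseteq\Inn(\FIj)$ by Corollary~\ref{coroiffMultInner}, Theorem~\ref{DecoAutFIP} and Remark~\ref{remarkDecomposition} let me write $\eta^{(j)}=\Psi_{u_j}\circ\rho_{\lambda_j}$ with $u_j\in U(\FIj)$. I then let $u\in\FI$ be the element with the prescribed $u_j$ for $j\in J_3$ and $u_j=\delta_j$ for $j\in (J_3)^c$, so that $u_{(J_3)^c}=\delta_{(J_3)^c}$ and $u\in U(\FI)$ by Corollary~\ref{cor1}(iv), and set $\rho=\Psi_u\circ\rho_\lambda$. Here $\rho_{J_3}=\Psi_{u_{J_3}}\circ\rho_{\lambda_{J_3}}$ has blocks $(\rho_{J_3})_j=\eta^{(j)}$, all involutions, so $\rho_{J_3}$ is an involution on $\FIJFix$ by Proposition~\ref{cor3} applied to $(X_{J_3},\lambda_{J_3})$, while $\rho_{(J_3)^c}=\rho_{\lambda_{(J_3)^c}}$ is an involution on $\FIJFixC$ as it is induced by the involution $\lambda_{(J_3)^c}$. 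Thus $\rho$ is an involution on $\FI$ by Proposition~\ref{prop3} with $L=J_3$, it induces $\lambda$, and $\rho_j=\eta^{(j)}$ for all $j\in J_3$, giving $\Omega(\overline{\rho})=(\overline{\eta^{(j)}})_{j\in J_3}$.

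I expect the only delicate point to be surjectivity, and specifically the verification that the glued data actually assembles into an involution on all of $\FI$: it is not automatic that combining involutions on the blocks $\FIj$ ($j\in J_3$) with some anti-automorphism on the complementary block yields an involution, which is precisely why Proposition~\ref{prop3} is invoked with $L=J_3$. The choice $u_{(J_3)^c}=\delta_{(J_3)^c}$ is what makes the complementary block manifestly involutive, since Proposition~\ref{cor3} cannot be applied to $X$ directly (one has $\lambda(X_j)\neq X_j$ for $j\in (J_3)^c$). By contrast, well-definedness and injectivity require no new work, being immediate from the equivalence already established in Theorem~\ref{teoiffinnerJ3}.
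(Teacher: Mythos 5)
Your proof is correct and follows essentially the same route as the paper: well-definedness and injectivity are read off from Theorem~\ref{teoiffinnerJ3}, and surjectivity is obtained by extending the units $u_j$ by $\delta$ on $X_{(J_3)^c}$ and gluing via Propositions~\ref{cor3} and~\ref{prop3}. The only quibble is that in your preliminary check that $\rho_j\in\Inv_{\lambda_j}(\FIj)$ you cite Proposition~\ref{cor3}, whose hypothesis $\lambda(X_j)=X_j$ for \emph{all} $j\in J$ need not hold here; the correct reference is Proposition~\ref{prop3} with $L=\{j\}$ for $j\in J_3$.
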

\begin{proof}
By Theorem~\ref{teoiffinnerJ3}, $\Omega$ is well-defined and injective. For each $j\in J_3$, let $\phi_j \in \Inv_{\lambda_j}(\FIj)$. Since $\Mult(\FIj)\subseteq \Inn(\FIj)$, by Corollary~\ref{coroiffMultInner},  there is $u_j\in U(\FIj)$ such that $\phi_{j}= \Psi_{u_j}\circ \rho_{\lambda_j}$, by Theorem~\ref{DecoAutFIP}. Let $v \in U(\FI)$ such that $v_j=u_j$ if $j \in J_3$, and $v_j=\delta_j$ if $j\not\in J_3$. Consider the anti-automorphism $\Phi= \Psi_{v} \circ \rho_{\lambda}$ of $\FI$. By Definition~\ref{def1}, for each $j\in J_3$, $\Phi_{j}= \Psi_{v_{j}} \circ \rho_{\lambda_{j}}= \Psi_{u_{j}} \circ \rho_{\lambda_{j}}=\phi_{j}$. Thus, since $\lambda(X_j)=X_j$ for all $j\in J_3$, then  $\Phi_{J_3}$ is an involution on $\FIJFix$, by Proposition~\ref{cor3}. On the other hand, $\Phi_{(J_3)^c}= \Psi_{v_{(J_3)^c}} \circ \rho_{\lambda_{(J_3)^c}}=\rho_{\lambda_{(J_3)^c}}$ which is an involution on $\FIJFixC$. By Proposition~\ref{prop3}, $\Phi$ is an involution on $\FI$. Therefore, $\Phi\in \Inv_{\lambda}(\FI)$ and $\Omega(\overline{\Phi})=(\overline{\Phi_j})_{j \in J_3}=(\overline{\phi_j})_{j \in J_3}$.
\end{proof}

Let $(P_1,P_2,P_3)$ be a $\lambda$-decomposition of $X$. For each $j \in J_3$, we set $P^j_n = P_n \cap X_j$ for $n =1,2,3$, and we have
\begin{align*}
X_j & =  X\cap X_j = (P_1\cup P_2\cup P_3)\cap X_j\\
    & = (P_1\cap X_j)\cup (P_2\cap X_j)\cup (P_3\cap X_j)\\
    & = P^j_1 \cup P^j_2 \cup P^j_3.
\end{align*}
Moreover,
\begin{itemize}
\item $P_3^j= \{x \in X_j: \lambda_j(x)=x\}$;
\item if $x \in P_1^j$ ($P_2^j$), then $\lambda_j(x) \in P_2^j$ ($P_1^j$);
\item if $x \in P_1^j$ ($P_2^j$) and $y \leq x$ ($x \leq y$), then $y \in P_1^j$ ($P_2^j$).
\end{itemize}
Therefore, $(P_1^j,P_2^j,P_3^j)$ is a $\lambda_j$-decomposition of $X_j$ for each $j \in J_3$. Consider the subset $J_3'=\{j\in J_3 : P^j_3=\emptyset\}$ of $J_3$. By \cite[Theorem~5.4]{BFS14}, for each $j \in J_3'$ there are only two equivalence classes of involutions on $\FIj$ that induce $\lambda_j$, via inner automorphism. On the other hand, if $j \in J_3\setminus J_3'$, then $P_3^j \neq \emptyset$. In this case, by \cite[Theorem~5.5]{BFS14}, the number of equivalence classes of involutions on $\FIj$ that induce $\lambda_j$, via inner automorphism, is equal to $|S_K|^{|P_3^j|-1}$, where $S_K = K^*/(K^*)^2$. Thus, by Lemma~\ref{bij_classes},
$$\left|\dfrac{\Inv_\lambda(\FI)}{\thickapprox}\right| = \displaystyle\prod_{j \in J_3}\left|\frac{\Inv_{\lambda_j}(\FIj)}{\thickapprox}\right| =2^{|J_3'|} \displaystyle\prod_{j \in J_3 \setminus J_3'}|S_K|^{|P_3^j|-1}.$$
Therefore we have the following result.

\begin{theorem}\label{teonumber}
The number of equivalence classes of involutions on $\FI$ that induce $\lambda$, via inner automorphism, is equal to
$2^{|J_3'|} \displaystyle\prod_{j \in J_3 \setminus J_3'}|S_K|^{|P_3^j|-1}$.
\end{theorem}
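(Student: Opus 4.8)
The plan is to reduce the global count to a product of component-wise counts indexed by $J_3$, using the bijection $\Omega$ established in Lemma~\ref{bij_classes}, and then to invoke the connected-case classification of \cite{BFS14} on each factor. First I would observe that, since $\Omega$ is a bijection, passing to cardinalities gives
$$\left|\dfrac{\Inv_\lambda(\FI)}{\thickapprox}\right| = \prod_{j \in J_3}\left|\dfrac{\Inv_{\lambda_j}(\FIj)}{\thickapprox}\right|,$$
so it suffices to compute each factor $\left|\frac{\Inv_{\lambda_j}(\FIj)}{\thickapprox}\right|$ separately for $j \in J_3$.

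Next I would fix a $\lambda$-decomposition $(P_1,P_2,P_3)$ of $X$ and, for each $j \in J_3$, set $P_n^j = P_n \cap X_j$ for $n=1,2,3$. The key preliminary step is to verify that $(P_1^j,P_2^j,P_3^j)$ is a genuine $\lambda_j$-decomposition of $X_j$, i.e.\ that it satisfies conditions (i)--(iii) of the definition. Here one uses that $\lambda(X_j)=X_j$ for $j \in J_3$ (so that $\lambda_j$ is a well-defined involution on $X_j$ by Remark~\ref{rest_maps_posets}~(ii)), together with the fact that comparability in $X$ forces both elements into the same connected component, so that the downward-closure of $P_1$ and upward-closure of $P_2$ restrict correctly to $X_j$. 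Once this is checked, the two cited theorems of \cite{BFS14} apply directly to $\FIj$.

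Then I would partition $J_3$ into $J_3' = \{j \in J_3 : P_3^j = \emptyset\}$ and its complement. For $j \in J_3'$, \cite[Theorem~5.4]{BFS14} yields exactly two equivalence classes of involutions on $\FIj$ inducing $\lambda_j$ via inner automorphism, contributing a factor of $2$. For $j \in J_3 \setminus J_3'$ we have $P_3^j \neq \emptyset$, and \cite[Theorem~5.5]{BFS14} yields exactly $|S_K|^{|P_3^j|-1}$ such classes, where $S_K = K^*/(K^*)^2$. Substituting these values into the product above gives
$$\left|\dfrac{\Inv_\lambda(\FI)}{\thickapprox}\right| = 2^{|J_3'|}\prod_{j \in J_3 \setminus J_3'}|S_K|^{|P_3^j|-1},$$
which is precisely the claimed formula.

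The genuine difficulty of this result is entirely upstream: all the substantive content lives in Lemma~\ref{bij_classes} (the bijection $\Omega$, which already encodes Theorems~\ref{teoeqtoRholambda} and~\ref{teoiffinnerJ3}) and in the connected-case counts of \cite{BFS14}. Given those, the argument here is a bookkeeping computation, and the only point that merits genuine care is confirming that the restricted triples $(P_1^j,P_2^j,P_3^j)$ are legitimate $\lambda_j$-decompositions, so that the hypotheses of \cite[Theorems~5.4 and~5.5]{BFS14} are met.
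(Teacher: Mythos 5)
Your proposal is correct and follows essentially the same route as the paper: it invokes the bijection $\Omega$ of Lemma~\ref{bij_classes} to reduce to a product over $J_3$, checks that each restricted triple $(P_1^j,P_2^j,P_3^j)$ is a $\lambda_j$-decomposition of $X_j$, and then applies \cite[Theorems~5.4 and~5.5]{BFS14} to the factors indexed by $J_3'$ and $J_3\setminus J_3'$ respectively. No gaps.
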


\subsection{General classification of involutions}

The general classification of involutions on $\FI$ follows directly from the classification via inner automorphism, by the following theorem, whose proof is the same as in \cite[Theorem~5.6]{BFS14}.

\begin{theorem}~\label{teogeralclassiinvolu}
The involutions $\rho_1$ and $\rho_2$ on $\FI$ are equivalent if and only if there exists an automorphism $\alpha$ of $X$ such that $\rho_1$ and $\widehat{\alpha} \circ \rho_2 \circ \widehat{\alpha}^{-1}$ are equivalent via inner automorphism.
\end{theorem}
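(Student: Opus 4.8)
The plan is to prove both implications directly, using as the main engine the decomposition Theorem~\ref{DecoAutFIP} together with the standing hypothesis $\Mult(\FI)\subseteq\Inn(\FI)$, which guarantees that every automorphism of $\FI$ factors as $\Psi_u\circ\widehat{\alpha}$ for some $u\in U(\FI)$ and some automorphism $\alpha$ of $X$. Beyond this, the only algebraic fact I will need is the conjugation identity
$$\widehat{\alpha}^{-1}\circ\Psi_u\circ\widehat{\alpha}=\Psi_{\widehat{\alpha}^{-1}(u)},$$
which follows at once from $\widehat{\alpha}$ being an automorphism and from $\widehat{\alpha^{-1}}=\widehat{\alpha}^{-1}$, together with its equivalent reorganized form $\widehat{\alpha}\circ\Psi_{\widehat{\alpha}^{-1}(u)}=\Psi_u\circ\widehat{\alpha}$. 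I will also use repeatedly that conjugating an involution by an automorphism of $\FI$ again produces an involution, so that $\widehat{\alpha}\circ\rho_2\circ\widehat{\alpha}^{-1}$ is a genuine involution for every automorphism $\alpha$ of $X$.

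For the implication ``$\Leftarrow$'', I would assume there is an automorphism $\alpha$ of $X$ and some $\Psi\in\Inn(\FI)$ with $\Psi\circ\rho_1=(\widehat{\alpha}\circ\rho_2\circ\widehat{\alpha}^{-1})\circ\Psi$. Composing on the left with $\widehat{\alpha}^{-1}$ and setting $\phi=\widehat{\alpha}^{-1}\circ\Psi\in\Aut(\FI)$ converts this identity into $\phi\circ\rho_1=\rho_2\circ\phi$, which is exactly the assertion that $\rho_1$ and $\rho_2$ are equivalent. This direction is purely formal and requires no use of the standing hypothesis.

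For the implication ``$\Rightarrow$'', I would start from an automorphism $\phi$ of $\FI$ satisfying $\phi\circ\rho_1=\rho_2\circ\phi$ and invoke Theorem~\ref{DecoAutFIP} to write $\phi=\Psi_u\circ\widehat{\alpha}$. Composing the resulting relation $\Psi_u\circ\widehat{\alpha}\circ\rho_1=\rho_2\circ\Psi_u\circ\widehat{\alpha}$ on the left with $\widehat{\alpha}^{-1}$, then applying the conjugation identity to the left-hand side (which becomes $\Psi_{\widehat{\alpha}^{-1}(u)}\circ\rho_1$) and the reorganized identity to absorb $\widehat{\alpha}$ on the right-hand side, I expect to arrive at
$$\Psi_{\widehat{\alpha}^{-1}(u)}\circ\rho_1=(\widehat{\alpha}^{-1}\circ\rho_2\circ\widehat{\alpha})\circ\Psi_{\widehat{\alpha}^{-1}(u)}.$$
Since $\Psi_{\widehat{\alpha}^{-1}(u)}\in\Inn(\FI)$ and $\widehat{\alpha}^{-1}\circ\rho_2\circ\widehat{\alpha}=\widehat{\alpha^{-1}}\circ\rho_2\circ(\widehat{\alpha^{-1}})^{-1}$, this says precisely that $\rho_1$ and $\widehat{\beta}\circ\rho_2\circ\widehat{\beta}^{-1}$ are equivalent via inner automorphism for the choice $\beta=\alpha^{-1}$, which is the automorphism of $X$ witnessing the statement. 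The main obstacle I anticipate is bookkeeping rather than any genuine difficulty: one must track carefully how the inner factor $\Psi_u$ transforms as it is pushed through $\widehat{\alpha}$ and $\widehat{\alpha}^{-1}$ on each side, so as to recognize the right-hand side as the conjugated involution composed with a single inner automorphism, and one must confirm that this conjugate is again an involution. This is consistent with the remark that the argument parallels \cite[Theorem~5.6]{BFS14}.
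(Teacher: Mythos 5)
Your proposal is correct and rests on exactly the same idea as the paper's proof: factor the automorphism realizing the equivalence as $\Psi_u\circ\widehat{\alpha}$ via Theorem~\ref{DecoAutFIP} and the standing hypothesis $\Mult(\FI)\subseteq\Inn(\FI)$, then rearrange the conjugation identity. The paper handles the biconditional in a single rewriting $\rho_1=\Psi\circ(\widehat{\alpha}\circ\rho_2\circ\widehat{\alpha}^{-1})\circ\Psi^{-1}$, while you carry out the two directions separately and land on the witness $\alpha^{-1}$ instead of $\alpha$; this is only a difference in bookkeeping, not in substance.
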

\begin{proof}
The involutions $\rho_1$ and $\rho_2$ on $\FI$ are equivalent if and only if there exists an automorphism $\Phi$ of $\FI$ such that $\rho_1 = \Phi \circ \rho_2 \circ \Phi^{-1}$. By Theorem~\ref{DecoAutFIP}, $\Phi= \Psi \circ \widehat{\alpha}$ where $\Psi \in \Inn(\FI)$ and $\alpha$ is an automorphism of $X$. Therefore, $\rho_1$ and $\rho_2$ are equivalent if and only if $\rho_1 = \Psi \circ \widehat{\alpha} \circ  \rho_2 \circ \widehat{\alpha}^{-1}\circ \Psi^{-1}$.
\end{proof}

As in \cite[p.1953]{BFS11}, we consider the equivalence relation $\sim$ on the set of all involutions on $X$ as follows:
\begin{center}
    $\lambda \sim \mu \Leftrightarrow \alpha\circ \lambda = \mu \circ \alpha$ for some automorphism $\alpha$ of $X$.
\end{center}
\begin{remark} \label{obs4}
Let $\lambda$ and $\alpha$ be an involution and an automorphism of $X$, respectively, and $u\in U(\FI)$. If $\rho=\Psi_u\circ \rho_{\lambda}$ is an involution on $\FI$, then $\widehat{\alpha}\circ\rho\circ\widehat{\alpha}^{-1}=\Psi_{\widehat{\alpha}(u)}\circ\rho_{\alpha\circ\lambda\circ\alpha^{-1}}$.
In particular, $\alpha\circ\lambda\circ\alpha^{-1}$ is the involution on $X$ induced by the involution $\widehat{\alpha}\circ\rho\circ\widehat{\alpha}^{-1}$.
\end{remark}

\begin{corollary} \label{corogeralclassi}
\begin{enumerate}
\item[(i)] If $\rho_1$ and $\rho_2$ are equivalent involutions on $\FI$, then $\lambda_{\rho_1} \sim  \lambda_{\rho_2}$.
\item[(ii)] If $\lambda_1 \sim \lambda_2$ and $\rho$ is an involution on $\FI$ that induces $\lambda_1$ on $X$, then $\rho$ is equivalent to some involution $\eta$ that induces $\lambda_2$.
\end{enumerate}
\end{corollary}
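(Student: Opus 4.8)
The plan is to reduce both parts to two ingredients already available: the structural decomposition $\Phi = \Psi_w \circ \widehat{\alpha}$ of an arbitrary automorphism of $\FI$ (Theorem~\ref{DecoAutFIP} together with the opening discussion of Section~\ref{SRestric}), and the conjugation formula of Remark~\ref{obs4}, which records that conjugating an involution $\rho = \Psi_u \circ \rho_\lambda$ by $\widehat{\alpha}$ produces an involution inducing $\alpha \circ \lambda \circ \alpha^{-1}$ on $X$. The only extra fact needed is the cited result \cite[Theorem~5.1]{BFS14}, that equivalence via an inner automorphism leaves the induced involution on $X$ unchanged.

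For part (i), I would start from the assumption that $\rho_1$ and $\rho_2$ are equivalent, so there is $\Phi \in \Aut(\FI)$ with $\Phi \circ \rho_1 = \rho_2 \circ \Phi$, that is, $\rho_2 = \Phi \circ \rho_1 \circ \Phi^{-1}$. Writing $\Phi = \Psi_w \circ \widehat{\alpha}$ with $w \in U(\FI)$ and $\alpha$ an automorphism of $X$, I would set $\eta := \widehat{\alpha} \circ \rho_1 \circ \widehat{\alpha}^{-1}$, which by Remark~\ref{obs4} is an involution inducing $\alpha \circ \lambda_{\rho_1} \circ \alpha^{-1}$. Then $\rho_2 = \Psi_w \circ \eta \circ \Psi_w^{-1}$ rearranges to $\Psi_w \circ \eta = \rho_2 \circ \Psi_w$, so $\eta$ and $\rho_2$ are equivalent via the inner automorphism $\Psi_w$; by \cite[Theorem~5.1]{BFS14} they induce the same involution on $X$. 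Hence $\lambda_{\rho_2} = \alpha \circ \lambda_{\rho_1} \circ \alpha^{-1}$, which is precisely $\lambda_{\rho_1} \sim \lambda_{\rho_2}$.

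For part (ii), I would assume $\lambda_1 \sim \lambda_2$, so $\alpha \circ \lambda_1 = \lambda_2 \circ \alpha$ for some automorphism $\alpha$ of $X$, equivalently $\lambda_2 = \alpha \circ \lambda_1 \circ \alpha^{-1}$. Given an involution $\rho$ inducing $\lambda_1$, I would take $\eta := \widehat{\alpha} \circ \rho \circ \widehat{\alpha}^{-1}$. A short check shows $\eta$ is again an involution, since it is an anti-automorphism and $\eta \circ \eta = \widehat{\alpha} \circ \rho^2 \circ \widehat{\alpha}^{-1} = \Id$; by Remark~\ref{obs4} it induces $\alpha \circ \lambda_1 \circ \alpha^{-1} = \lambda_2$, so $\eta \in \Inv_{\lambda_2}(\FI)$. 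Finally, the defining identity for $\eta$ rearranges to $\widehat{\alpha} \circ \rho = \eta \circ \widehat{\alpha}$, so taking $\phi = \widehat{\alpha}$ exhibits $\rho$ and $\eta$ as equivalent, which is what the statement requires.

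I do not expect a serious obstacle: the corollary is essentially a repackaging of Theorem~\ref{DecoAutFIP}, Remark~\ref{obs4}, and \cite[Theorem~5.1]{BFS14}. The only points demanding care are notational bookkeeping, namely keeping the direction of the conjugation and the convention $\phi \circ \rho_1 = \rho_2 \circ \phi$ consistent throughout, and verifying in part (ii) that the conjugate $\eta$ is a \emph{genuine involution} lying in $\Inv_{\lambda_2}(\FI)$ rather than merely an anti-automorphism inducing $\lambda_2$.
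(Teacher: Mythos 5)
Your proposal is correct and uses exactly the ingredients the paper relies on (the decomposition $\Phi=\Psi_w\circ\widehat{\alpha}$ from Theorem~\ref{teogeralclassiinvolu}/Theorem~\ref{DecoAutFIP}, the invariance of the induced involution under inner equivalence from \cite[Theorem~5.1]{BFS14}, and the conjugation formula of Remark~\ref{obs4}); the paper simply outsources this same argument to \cite[Corollary~27]{BFS11} with those substitutions. No differences of substance.
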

\begin{proof}
The proof is the same as in \cite[Corollary~27]{BFS11}, just replacing Theorem~25, Corollary~6, and Proposition~26 of \cite{BFS11} by Theorem~\ref{teogeralclassiinvolu}, \cite[Theorem~5.1]{BFS14}, and Remark~\ref{obs4}, respectively.
\end{proof}

We have seen that if $\lambda$ is an involution on $X$ such that $\lambda(X_j)\neq X_j$ for all $j\in J$, then every involution on $\FI$ that induces $\lambda$ is equivalent to $\rho_{\lambda}$ (via inner automorphism) (Theorem~\ref{teoeqtoRholambda}). On the other hand, if $\lambda(X_i)= X_i$ for some $i\in J$, then the equivalence via inner automorphism of two involutions on $\FI$ that induce $\lambda$ is given by the restriction to the set $J_3= \{j \in J : \lambda(X_j)=X_j\}$ (Theorem~\ref{teoiffinnerJ3}). Finally, we will see that the general classification, in the latter case, is also given by the restriction to $J_3$ (Theorem~\ref{teoiffgeral}).

\begin{lemma}\label{lemmaalphaJ3}
Let $\lambda$ be an involution on $X$ such that $J_3\neq \emptyset$ and $\alpha$ an automorphism of $X$ such that $\alpha\circ\lambda=\lambda\circ\alpha$. Then $\alpha(X_{J_3})=X_{J_3}$.
\end{lemma}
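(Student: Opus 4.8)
The plan is to exploit the fact that an automorphism of $X$ permutes the connected components, and that this permutation is compatible with the action of $\lambda$ on $J$. By Remark~\ref{rest_maps_posets}~(i), $\alpha$ induces a permutation $\overline{\alpha}$ of $J$ determined by $\alpha(X_i)=X_{\overline{\alpha}(i)}$ for each $i\in J$. Since $X_{J_3}=\bigcup_{i\in J_3}X_i$, it suffices to prove that $\overline{\alpha}(J_3)\subseteq J_3$, and then to upgrade this inclusion to an equality.

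The key step is the direct verification that the hypothesis $\alpha\circ\lambda=\lambda\circ\alpha$ forces $\overline{\alpha}$ to preserve $J_3$. Fix $i\in J_3$, so that $\lambda(X_i)=X_i$ by \eqref{eqj3}, and put $j=\overline{\alpha}(i)$, i.e.~$\alpha(X_i)=X_j$. Applying both sides of the commuting relation to the set $X_i$ and using $\lambda(X_i)=X_i$, I would compute
$$\lambda(X_j)=\lambda(\alpha(X_i))=(\lambda\circ\alpha)(X_i)=(\alpha\circ\lambda)(X_i)=\alpha(\lambda(X_i))=\alpha(X_i)=X_j.$$
Hence $\lambda(X_j)=X_j$, so $j\in J_3$ by \eqref{eqj3}; that is, $\overline{\alpha}(i)\in J_3$. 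Taking the union over all $i\in J_3$ then yields $\alpha(X_{J_3})=\bigcup_{i\in J_3}\alpha(X_i)\subseteq X_{J_3}$.

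Finally, to promote this inclusion to the desired equality (which does not follow from injectivity of $\overline{\alpha}$ alone when $J_3$ is infinite), I would apply the same argument to $\alpha^{-1}$. From $\alpha\circ\lambda=\lambda\circ\alpha$ one obtains $\alpha^{-1}\circ\lambda=\lambda\circ\alpha^{-1}$, so $\alpha^{-1}$ is again an automorphism of $X$ commuting with $\lambda$; hence $\alpha^{-1}(X_{J_3})\subseteq X_{J_3}$ by the previous paragraph, and applying $\alpha$ to this inclusion gives $X_{J_3}\subseteq\alpha(X_{J_3})$. Combining the two inclusions yields $\alpha(X_{J_3})=X_{J_3}$. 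I do not expect a real obstacle here; the only point needing a little care is precisely this last reduction to $\alpha^{-1}$, which guarantees equality rather than merely an inclusion.
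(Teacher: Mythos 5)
Your proof is correct and follows essentially the same route as the paper: show $\overline{\alpha}(J_3)\subseteq J_3$ by commuting $\alpha$ past $\lambda$ on each component, then apply the same argument to $\alpha^{-1}$ to obtain the reverse inclusion. No issues.
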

\begin{proof}
Let $j\in J_3$. By Remark~\ref{rest_maps_posets}, there exists $i\in J$ such that $\alpha(X_j)=X_i$, and by \eqref{eqj3}, $\lambda(X_j)=X_j$. So
$$X_i=\alpha(X_j)=\alpha(\lambda(X_j))=\lambda(\alpha(X_j))=\lambda(X_i)$$
and then $i\in J_3$. Thus, $\alpha(X_j)=X_i\subseteq X_{J_3}$. Therefore, $\alpha(X_{J_3})\subseteq X_{J_3}$. On the other hand, since $\alpha^{-1}$ is an automorphism such that $\alpha^{-1}\circ\lambda=\lambda\circ\alpha^{-1}$, then  $\alpha^{-1}(X_{J_3})\subseteq X_{J_3}$.
\end{proof}

\begin{theorem}\label{teoiffgeral}
Let $\lambda$ be an involution on $X$ such that $J_3\neq \emptyset$. Let $\rho, \eta \in \Inv_\lambda(\FI)$. Then $\rho$ and $\eta$ are equivalent if and only if the involutions $\rho_{J_3}$ and $\eta_{J_3}$ on $\FIJFix$ are equivalent.
\end{theorem}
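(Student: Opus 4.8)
The plan is to reduce the general equivalence of $\rho$ and $\eta$ to equivalence via inner automorphism on $X_{J_3}$, which was already settled in Theorem~\ref{teoiffinnerJ3}. The bridge between the two notions is Theorem~\ref{teogeralclassiinvolu}, which lets me strip off an "outer" automorphism $\widehat{\alpha}$, while Lemma~\ref{lemmaalphaJ3} guarantees that the relevant automorphism of $X$ restricts to $X_{J_3}$. The computational heart of the argument is a restriction formula for conjugation: if $\alpha$ is an automorphism of $X$ with $\alpha(X_{J_3})=X_{J_3}$ and $\eta\in\Inv_\lambda(\FI)$, then
$$\bigl(\widehat{\alpha}\circ\eta\circ\widehat{\alpha}^{-1}\bigr)_{J_3}=\widehat{\alpha_{J_3}}\circ\eta_{J_3}\circ\bigl(\widehat{\alpha_{J_3}}\bigr)^{-1}.$$
To prove this I would apply Proposition~\ref{prop2}(iii) twice, first to $\eta\circ\widehat{\alpha}^{-1}$ and then to $\widehat{\alpha}\circ(\eta\circ\widehat{\alpha}^{-1})$; this is legitimate because $\alpha$, $\alpha^{-1}$, $\lambda$ and $\lambda\circ\alpha^{-1}$ all preserve $X_{J_3}$ (using $\alpha(X_{J_3})=X_{J_3}$ and \eqref{eqfixPJ3}). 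The pieces $\widehat{\alpha}_{J_3}=\widehat{\alpha_{J_3}}$ and $(\widehat{\alpha}^{-1})_{J_3}=(\widehat{\alpha_{J_3}})^{-1}$ then follow from Definition~\ref{def1} (taking the unit to be $\delta$) together with Remark~\ref{rest_maps_posets}(ii). Throughout I use that $\FIJFix$ inherits the hypothesis $\Mult(\FIJFix)\subseteq\Inn(\FIJFix)$ via Corollary~\ref{coroiffMultInner}, so the classification theorems apply to it as well.

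For the forward implication, assume $\rho$ and $\eta$ are equivalent. By Theorem~\ref{teogeralclassiinvolu} there is an automorphism $\alpha$ of $X$ such that $\rho$ and $\eta':=\widehat{\alpha}\circ\eta\circ\widehat{\alpha}^{-1}$ are equivalent via inner automorphism. Since equivalence via inner automorphism forces both involutions to induce the same involution on $X$ (\cite[Theorem~5.1]{BFS14}) and $\eta'$ induces $\alpha\circ\lambda\circ\alpha^{-1}$ by Remark~\ref{obs4}, I obtain $\alpha\circ\lambda=\lambda\circ\alpha$, whence $\alpha(X_{J_3})=X_{J_3}$ by Lemma~\ref{lemmaalphaJ3}. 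In particular $\eta'\in\Inv_\lambda(\FI)$, so Theorem~\ref{teoiffinnerJ3} yields that $\rho_{J_3}$ and $(\eta')_{J_3}$ are equivalent via inner automorphism. Writing $\beta:=\alpha_{J_3}$, the restriction formula gives $(\eta')_{J_3}=\widehat{\beta}\circ\eta_{J_3}\circ\widehat{\beta}^{-1}$, so Theorem~\ref{teogeralclassiinvolu} applied to $\FIJFix$ shows $\rho_{J_3}$ and $\eta_{J_3}$ are equivalent.

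For the converse, assume $\rho_{J_3}$ and $\eta_{J_3}$ are equivalent. By Theorem~\ref{teogeralclassiinvolu} on $\FIJFix$ there is an automorphism $\beta$ of $X_{J_3}$ with $\rho_{J_3}$ equivalent via inner automorphism to $\widehat{\beta}\circ\eta_{J_3}\circ\widehat{\beta}^{-1}$; since the latter induces $\beta\circ\lambda_{J_3}\circ\beta^{-1}$ (Remark~\ref{obs4}) and must induce $\lambda_{J_3}$, the automorphism $\beta$ commutes with $\lambda_{J_3}$. I then extend $\beta$ to an automorphism $\alpha$ of $X$ by declaring $\alpha$ to be the identity on $X_{(J_3)^c}$; this is a well-defined poset automorphism because there are no comparabilities between distinct connected components, and it commutes with $\lambda$ (on $X_{J_3}$ because $\beta$ does, and on $X_{(J_3)^c}$ because $\lambda$ preserves $X_{(J_3)^c}$ while $\alpha$ is the identity there). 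Consequently $\eta'':=\widehat{\alpha}\circ\eta\circ\widehat{\alpha}^{-1}$ induces $\lambda$, so $\eta''\in\Inv_\lambda(\FI)$, and the restriction formula gives $(\eta'')_{J_3}=\widehat{\beta}\circ\eta_{J_3}\circ\widehat{\beta}^{-1}$, which is equivalent via inner automorphism to $\rho_{J_3}$. Hence Theorem~\ref{teoiffinnerJ3} makes $\rho$ and $\eta''$ equivalent via inner automorphism, and Theorem~\ref{teogeralclassiinvolu} then yields that $\rho$ and $\eta$ are equivalent.

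The main obstacle I anticipate is establishing the restriction formula for conjugation and, more generally, the careful bookkeeping needed to verify that every induced map on $X$ appearing along the way actually stabilizes $X_{J_3}$, so that Proposition~\ref{prop2}(iii) and Definition~\ref{def1} may be invoked. Once this is in place, both directions amount to chaining Theorems~\ref{teogeralclassiinvolu} and \ref{teoiffinnerJ3}, with the only genuinely new input in the converse being the construction of the global automorphism $\alpha$ extending $\beta$ by the identity and the verification that it commutes with $\lambda$.
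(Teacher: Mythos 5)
Your proposal is correct and follows essentially the same route as the paper: both directions reduce to Theorem~\ref{teoiffinnerJ3} via Theorem~\ref{teogeralclassiinvolu}, using Lemma~\ref{lemmaalphaJ3} (forward) and the extension of $\beta$ by the identity off $X_{J_3}$ (converse), with the restriction formula $(\widehat{\alpha}\circ\eta\circ\widehat{\alpha}^{-1})_{J_3}=\widehat{\alpha_{J_3}}\circ\eta_{J_3}\circ\widehat{\alpha_{J_3}}^{-1}$ obtained from Proposition~\ref{prop2}(iii) exactly as in the paper. The extra bookkeeping you flag (checking that the induced maps stabilize $X_{J_3}$ and that $\Mult(\FIJFix)\subseteq\Inn(\FIJFix)$) is handled the same way there.
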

\begin{proof}
Suppose $\rho$ and $\eta$ are equivalent. By Theorem~\ref{teogeralclassiinvolu}, there exists an automorphism $\alpha$ of $X$ such that $\rho$ and $\widehat{\alpha}\circ\eta\circ\widehat{\alpha}^{-1}$ are equivalent via inner automorphism. Thus, by \cite[Theorem~5.1]{BFS14} and Remark~\ref{obs4}, $\lambda=\alpha \circ \lambda \circ \alpha^{-1}$, and then $\alpha(X_{J_3})=X_{J_3}$ by Lemma~\ref{lemmaalphaJ3}. It follows from Theorem~\ref{teoiffinnerJ3}, Proposition~\ref{prop2}~(iii) and \eqref{eqfixPJ3} that $\rho_{J_3}$ and $\widehat{\alpha_{J_3}}\circ\eta_{J_3}\circ\widehat{\alpha_{J_3}}^{-1}$ are equivalent via inner automorphism. Therefore, $\rho_{J_3}$ and $\eta_{J_3}$ are equivalent by Theorem~\ref{teogeralclassiinvolu}.

Conversely, suppose $\rho_{J_3}$ and $\eta_{J_3}$ are equivalent. By Theorem~\ref{teogeralclassiinvolu}, there exists an automorphism $\beta$ of $X_{J_3}$ such that $\rho_{J_3}$ and $\widehat{\beta} \circ \eta_{J_3}\circ \widehat{\beta}^{-1}$ are equivalent via inner automorphism. By \cite[Theorem~5.1]{BFS14}, Definition~\ref{def1}, and Remark~\ref{obs4},
\begin{equation}\label{eqbetalambJ3}
    \lambda_{J_3}= \beta \circ \lambda_{J_3} \circ \beta^{-1}.
\end{equation}
Consider the automorphism $\alpha:X \to X$ defined by
\begin{equation} \label{eqdefalpha}
    \alpha(x) = \begin{cases}
    \beta(x) & \text{ if } x \in X_{J_3}\\
    x & \text{ if } x \not\in X_{J_3}
    \end{cases}.
\end{equation}
Then $\alpha(X_{J_3})=\beta(X_{J_3})=X_{J_3}$ and $\alpha_{J_3}=\beta$. Thus, by Proposition~\ref{prop2}~(iii),
$$(\widehat{\alpha}\circ \eta \circ \widehat{\alpha}^{-1})_{J_3} = \widehat{\alpha_{J_3}}\circ \eta_{J_3} \circ \widehat{\alpha_{J_3}}^{-1}=\widehat{\beta} \circ \eta_{J_3} \circ \widehat{\beta}^{-1}.$$
It follows that $\rho_{J_3}$ and $(\widehat{\alpha}\circ \eta \circ \widehat{\alpha}^{-1})_{J_3}$ are equivalent via inner automorphism. Moreover, by \eqref{eqbetalambJ3} and \eqref{eqdefalpha}, $\lambda=\alpha\circ\lambda\circ\alpha^{-1}$, therefore $\rho$ and $\widehat{\alpha}\circ \eta \circ \widehat{\alpha}^{-1}$ induce the same involution on $X$, by Remark~\ref{obs4}. Thus, by Theorem~\ref{teoiffinnerJ3}, $\rho$ and $\widehat{\alpha}\circ\eta\circ\widehat{\alpha}^{-1}$ are equivalent via inner automorphism, whence $\rho$ and $\eta$ are equivalent, by Theorem~\ref{teogeralclassiinvolu}.
\end{proof}







\section*{Acknowledgments}
The second author was financed in part by Coordena\c{c}\~{a}o de Aperfei\c{c}oamento de Pessoal de N\'{i}vel Superior - Brasil (CAPES) - Finance 001.


\begin{thebibliography}{99}

\bibitem{BFS11} Brusamarello, R., Fornaroli, E.~Z., Santulo Jr., E.~A., Classification of involutions on incidence algebras, {\it Comm.~Algebra} {\bf 39} (2011) 1941--1955. DOI: https://doi.org/10.1080/00927872.2010.480958

\bibitem{BFS12} Brusamarello, R., Fornaroli, E.~Z., Santulo Jr., E.~A., Anti-automorphisms and involutions on (finitary) incidence algebras, {\it Linear Multilinear Algebra} {\bf 60} (2012) 181--188. DOI: https://doi.org/10.1080/03081087.2011.576393

\bibitem{BFS14} Brusamarello, R., Fornaroli, E.~Z., Santulo Jr., E.~A., Classification of involutions on finitary incidence algebras, {\it Internat.~J.~Algebra Comput.} \textbf{24}(8) (2014), 1085--1098. DOI: https://doi.org/10.1142/S0218196714500477

\bibitem{BMUL} Brusamarello, R., Fornaroli, E.~Z., Santulo Jr., E.~A., Multiplicative automorphisms of incidence algebras, \textit{Comm.~Algebra} \textbf{43}(2) (2015), 726--736. DOI: https://doi.org/10.1080/00927872.2013.847951

\bibitem{BL} Brusamarello, R., Lewis, D.~W., Automorphisms and involutions on incidence algebras, \textit{Linear Multilinear Algebra} \textbf{59}(11) (2011), 1247--1267. DOI: https://doi.org/10.1080/03081087.2010.496113

\bibitem{DKL} Di Vincenzo, O.~M., Koshlukov, P., La Scala, R., Involutions for upper triangular matrix algebras, \textit{Adv.~in Appl.~Math.} \textbf{37}(4) (2006), 541--568. DOI: https://doi.org/10.1016/j.aam.2005.07.004

\bibitem{ER2} Fornaroli, E.~Z., Pezzott, R.~E.~M., Anti-isomorphisms and involutions on the idealization of the incidence space over the finitary incidence algebra, \textit{Linear Algebra Appl.} \textbf{637} (2022), 82--109. DOI: https://doi.org/10.1016/j.laa.2021.12.005

\bibitem{Jacobson} Jacobson, N., {\it Finite-Dimensional Division Algebras over Fields}, Springer-Verlag, Berlin, 1996.

\bibitem{AutK} Khripchenko, N.~S., Automorphisms of finitary incidence rings, {\it Algebra Discrete Math.} \textbf{9}(2) (2010), 78--97.


\bibitem{KN} Khripchenko, N.~S., Novikov, B.~V., Finitary incidence algebras, {\it Comm.~Algebra} \textbf{37}(5) (2009), 1670--1676. DOI: https://doi.org/10.1080/00927870802210019

\bibitem{Knus} Knus, M-A., Merkurjev, A., Rost, M., Tignol, J-P., \textit{The Book of Involutions}, American Mathematical Society Colloquium Publications, vol.~44, American Mathematical Society, Providence, RI, 1998.

\bibitem{Rota} Rota, G.-C., On the foundations of combinatorial theory. I. Theory of M{\"o}bius functions, \textit{Z.~Wahrscheinlichkeitstheorie und Verw.~Gebiete} \textbf{2}(4) (1964), 340--368. DOI: https://doi.org/10.1007/BF00531932

\bibitem{Scharlau} Scharlau, W., Automorphisms and involutions of incidence algebras, \textit{Lectures Notes in Mathematics} \textbf{488} (1975), 340--350.
DOI: https://doi.org/10.1007/BFb0081233

\bibitem{Spiegel05} Spiegel, E., Involutions in incidence algebras, \textit{Linear Algebra Appl.} \textbf{405} (2005), 155--162. DOI:
https://doi.org/10.1016/j.laa.2005.03.003

\bibitem{Spiegel08} Spiegel, E., Upper-triangular embeddings of incidence algebras with involution, \textit{Comm.~Algebra} \textbf{36}(5) (2008), 1675--1681. DOI: https://doi.org/10.1080/00927870801937224

\bibitem{SO97} Spiegel, E., O'Donnell, C.~J.,  \textit{Incidence Algebras}, Monographs and Textbooks in Pure and Applied Mathematics, vol.~206, Marcel Dekker, Inc., New York, 1997.

\end{thebibliography}
\end{document}